\documentclass[twoside,a4paper,reqno,12pt]{amsart}
\usepackage{lmodern}

\usepackage{cite}
\usepackage{mathrsfs}
\usepackage{tabularx} 
\usepackage{booktabs} 
\usepackage{amsmath}
\usepackage{amsthm}
\usepackage{latexsym}
\usepackage{amssymb,bm}
\usepackage{amsfonts}
\usepackage{longtable}
\usepackage{multirow}
\usepackage[colorlinks,linkcolor=cyan,anchorcolor=blue,citecolor=red,backref=page]{hyperref}
\usepackage{braket}
\usepackage{amsmath,enumerate}
\usepackage[top=28mm,right=25mm,bottom=28mm,left=25mm]{geometry}
\usepackage{todonotes}
\usepackage{xcolor}

\usepackage{graphics}

\usepackage{color}

\def\Aut{{\sf Aut}} 
 \def\S{\rm S}

\newtheorem{thm}{Theorem}[section]

\newtheorem{proposition}[thm]{Proposition}

\newtheorem{theorem}[thm]{Theorem}
\newtheorem{lemma}[thm]{Lemma}

\newtheorem{corollary}[thm]{Corollary}
\newtheorem{problem}{Problem}

\theoremstyle{definition}
\newtheorem{defi}[thm]{Definition}
\newtheorem{remark}[thm]{Remark}
\newtheorem*{remark*}{Remarks}
\newtheorem{example}[thm]{Example}
\newtheorem{construction}[thm]{Construction}

\newcommand{\emptycommand}{}

\def\S{{\mathrm{S}}}

\def\Sym{{\mathrm{Sym}}}

\def\leq{\leqslant}
\def\le{\leqslant}
\def\geq{\geqslant}
\def\ge{\geqslant}

\keywords{De Bruijn graph, Cayley graph, Praeger-Xu graph}
\subjclass[2000]{05C38, 20B25}

\begin{document}
	
	\title[Generalizing Praeger--Xu graphs]{Generalizing Praeger--Xu graphs}
	
	\author{Zhaochen Ding}
    \author{Ci Xuan Wu}
	\address{Z. Ding, The Faculty of Mathematics, Natural Sciences and Information Technologies\\
		University of Prismorska\\
		Koper\\
		Slovenia}
	\email{zhaochen.ding@upr.si}

	\address{C. X. Wu, School of Statistics and Mathematics \\
		Yunnan University of Finance and Economics\\
		Kunming 650021\\
		P. R. China}
	\email{wucixuan@gmail.com }

	\date{}

	\begin{abstract}
We uniformly study directed Praeger--Xu graphs and Praeger's generalized family $\overrightarrow{\operatorname{PX}}(\Gamma,r,s)$, without requiring that $s\mid r$ and $\Gamma$ is an undirected graph. 
Assuming that $\Gamma\times\overrightarrow{K}_2$ is connected, we determine the full automorphism group of $\overrightarrow{\operatorname{PX}}(\Gamma,r,s)$ for every $r$ and $s$, prove connectivity, characterize vertex-transitivity, and give an equivalent criterion for $t$-arc-transitivity. We also study when $\overrightarrow{\operatorname{PX}}(\Gamma,r,s)$ is a Cayley graph.

We also give two applications. First, we give a new family of twin-free connected arc-transitive digraphs of valency $4$ whose vertex stabilizers have exponential growth. 
We also give a connected regular digraph that is not vertex-transitive but whose direct product with a directed cycle is Cayley.
\end{abstract}
	\maketitle
	
	\section{Introduction}

\begingroup
\hypersetup{linkcolor=blue,citecolor=blue,urlcolor=blue}

All graphs and digraphs in this paper are finite. A \emph{digraph} $\Gamma$ has a vertex set $V\Gamma$ and an arc set $A\Gamma\subseteq V\Gamma\times V\Gamma$. We view a graph as a digraph with a symmetric arc set. A digraph is \emph{asymmetric} if it has no pair of opposite arcs. The \emph{underlying graph} of a digraph is obtained by replacing each arc $(u,v)$ with the undirected edge $\{u,v\}$. Throughout, \emph{connected} means that the underlying graph is connected. Strong connectivity is not considered in this paper. A digraph is \emph{regular of valency} $k$ if every vertex has in-valency and out-valency $k$. Throughout, we assume that every vertex of $\Gamma$ has positive in- and out-valency. For $G\le\Aut(\Gamma)$, we say that $\Gamma$ is \emph{$G$-vertex-transitive} or \emph{$G$-arc-transitive} if $G$ is transitive on vertices or arcs, respectively, and omit $G$ when $G=\Aut(\Gamma)$. We write $G_v$ for the stabilizer of a vertex $v$.

Praeger and Xu introduced the Praeger--Xu graphs while studying arc-transitive graphs of valency $2p$, where $p$ is prime \cite{PX}. 
Jajcay et al. characterize the tetravalent Praeger--Xu graphs that are Cayley graphs by a polynomial condition on their parameters \cite[Theorem~1.1]{Jajcay2022Cayleyness}. 
Poto\v{c}nik et al. show that, apart from the Praeger--Xu family, vertex stabilizers in connected tetravalent arc-transitive graphs satisfy a sublinear bound in terms of the number of vertices \cite[Corollary~1.3]{Potocnik2015Bounding}. In particular, arc-transitive tetravalent Praeger--Xu graphs have vertex stabilizers whose orders grow exponentially in the number of vertices \cite{Potocnik2015Bounding}. 
We use \emph{exponential growth} to refer to this growth of vertex-stabilizer orders.

Praeger later introduced a generalized Praeger--Xu construction and used it to construct $t$-arc-transitive digraphs for every $t\geq1$ \cite{Praeger1989high}. This contrasts with Weiss's bound for finite connected undirected graphs of valency at least $3$ \cite{Weiss1981}. Her construction assumes that $s\mid r$ and that the base graph is undirected. She deal with Praeger--Xu digraphs and generalized Praeger--Xu digraphs separately, rather than by one uniform method addressing both the original and generalized Praeger--Xu graphs.

M\"oller, Poto\v{c}nik and Seifter \cite{Moller2019Sharply} give a more general construction in a different direction. Their digraph $\mathbb{Z}(\Delta,k)$ is generally infinite, and its finite cyclic quotients include Praeger's generalized Praeger--Xu digraphs. Under suitable hypotheses, they obtain sharply $s$-arc-transitive examples, that is, examples that are $s$-arc-transitive but not $(s+1)$-arc-transitive. The periods of their finite quotients are multiples of $s$, and their work does not give a general formula for the full automorphism group.



For a digraph $\Gamma$, integers $r\geq3$ and $s\geq1$, Construction~\ref{construction} defines
\[
\overrightarrow{\operatorname{PX}}(\Gamma,r,s)=\Gamma^{*s}\times\overrightarrow{C}_r.
\]
This definition has no divisibility condition on $r$ and $s$, and $\Gamma$ may be any finite digraph with positive in- and out-valency. When $\Gamma$ is the complete digraph with self-loops on $m$ vertices, it gives the directed Praeger--Xu graph $\overrightarrow{\operatorname{PX}}(m,r,s)$; its underlying graph is $\operatorname{PX}(m,r,s)$. Thus it gives a uniform description of the directed Praeger--Xu graphs and Praeger's generalized family. We write $\Gamma[r]$ for $\overrightarrow{\operatorname{PX}}(\Gamma,r,r)$.

Our main results require $\Gamma\times\overrightarrow{K}_2$ to be connected. For an undirected graph $\Gamma$, this is equivalent to $\Gamma$ being connected and non-bipartite. The same hypothesis appears in the work of M\"oller, Poto\v{c}nik and Seifter \cite{Moller2019Sharply}: connectedness of the canonical double half-cover is used to identify the alternets and obtain connected examples. In our setting, Remark~\ref{rmk:necessarity-connected-alternets} gives an example where the description of the automorphism group fails without this hypothesis.

Under this hypothesis, we determine $\Aut(\overrightarrow{\operatorname{PX}}(\Gamma,r,s))$ for every $r$ and $s$ in terms of the two-fold automorphisms of $\Gamma$ (Theorem~\ref{thm:aut-general-px}). We also prove that the resulting digraph is connected, determine vertex-transitivity, and give an equivalent criterion for $t$-arc-transitivity (Theorem~\ref{cor:nice-connectivity}, Theorem~\ref{thm:arc-transitivity-new}). 
We also give a criterion for Cayleyness when all two-fold automorphisms of the base digraph are diagonal (Theorem~\ref{thm:Cayleyness}). We apply these results to study $s$-arc-transitivity of certain generalized Praeger--Xu digraphs (Corollary~\ref{cor:arc-transitivity-complete-loops}, Corollary~\ref{cor:arc-transitivity-complete-loopless}, Corollary~\ref{cor:arc-transitivity-complete-multipartite}), and construct a family of twin-free connected arc-transitive digraphs of valency $4$ whose vertex stabilizers have exponential growth (Theorem~\ref{thm:exponential-growth}). We also give a connected regular digraph that is not vertex-transitive although its direct product with a directed cycle is Cayley (Example~\ref{examp:non-Cayley}). 
This contrasts with the corresponding behaviour of connected direct products of undirected graphs \cite[Theorem~8.19]{Hammack2011Handbook}. As a byproduct, we prove that $\Aut(\Gamma^{*s})\cong\Aut(\Gamma)$ (Theorem~\ref{thm:auto-g-de-bruijn}). 

The construction leaves several questions.
\begin{problem}
    Can the hypothesis that $\Gamma\times\overrightarrow{K}_2$ is connected be removed from the automorphism and arc-transitivity results?
\end{problem}
\begin{problem}
    Determine when $\overrightarrow{\operatorname{PX}}(\Gamma,r,s)$ is a Cayley digraph.
\end{problem}
\begin{problem}
    Find loopless $\Gamma$, integers $r$ and $s$ with $s\nmid r$ such that $\overrightarrow{\operatorname{PX}}(\Gamma,r,s)$ is vertex-transitive.
\end{problem}

Throughout, group actions are written on the right. For $g,h\in G$, we write $g^h=h^{-1}gh$, and $\mathbb{Z}_k$ is written additively. We use $G\wr H$ for the regular wreath product of $G$ by $H$.
When the context is unambiguous, we also write the integer $i$ to represent its congruence class modulo $k$.

	\section{Preliminary}\label{sec:preliminary}

In this section, we recall some terminology for digraphs.

\subsection{Two-fold automorphism}
The concept of a two-fold automorphism originates from the study of the canonical double cover of a graph.
It was first formally introduced by Lauri, Mizzi, and Scapellato~\cite{LauriMS11} as a generalization of the classical notion of a graph automorphism, building on their earlier investigation of two-fold orbital digraphs~\cite{LauriMS04}.
Two-fold automorphisms are connected to the notion of graph stability: a graph is stable if and only if it admits no non-diagonal two-fold automorphism~\cite[Theorem 3.2]{LauriMS15}.
Recent work has further explored the realizability of prescribed two-fold automorphism groups via the canonical double cover construction~\cite{Bychawski2024}.

\begin{defi}
Let $\Gamma$ be a digraph with vertex set $V\Gamma$ and arc set $A\Gamma$, and let $\sigma_1, \sigma_2 \in \operatorname{Sym}(V(\Gamma))$. A pair $(\sigma_1, \sigma_2)$ is called a \emph{two-fold automorphism} of $\Gamma$ if 
\[
(u, v) \in A\Gamma \iff (u\sigma_1, v\sigma_2) \in A\Gamma \quad \text{for all } u, v \in V\Gamma.
\]
The set of all two-fold automorphisms of $\Gamma$ forms a subgroup of $\operatorname{Sym}(V\Gamma)^2$, called the \emph{two-fold automorphism group} of $\Gamma$ and denoted by $\operatorname{Aut}^{\mathrm{TF}}(\Gamma)$.
\end{defi}

The pair $(\operatorname{Sym}(V\Gamma), \operatorname{Aut}^{\mathrm{TF}}(\Gamma))$ defines a digraph 
with vertex set $\operatorname{Sym}(V\Gamma)$ and arc set $\operatorname{Aut}^{\mathrm{TF}}(\Gamma)$. 
When the context is clear, a two-fold automorphism $(\sigma_1, \sigma_2)$ 
is denoted by $\sigma_1 \to \sigma_2$. 
The following example is applied in Section~\ref{sec:exponential-growth}.

\begin{example}\label{examp:PX}
	Let $b\ge 1$, let $\ell\ge 2$, and let $\Gamma=K_{b,\ldots,b}$ be the complete multipartite graph with $\ell$ parts of size $b$.
	Then $V\Gamma=\mathbb{Z}_b\times\mathbb{Z}_\ell$ and $(x,y)$ is adjacent to $(z,t)$ if and only if $t\ne y$.
	For $i\in\mathbb{Z}_\ell$, let $B_{\emptycommand{i}}=\{(x,\emptycommand{i})\in \mathbb{Z}_b\times\mathbb{Z}_\ell\mid x\in\mathbb{Z}_b\}$.
	Let $H=\prod_i\operatorname{Sym}(B_{\emptycommand{i}})\le\operatorname{Sym}(V\Gamma)$.
	For every $\sigma\in \operatorname{Sym}(\mathbb{Z}_\ell)$, define $\sigma^*\in \operatorname{Sym}(V\Gamma)$ by
	\[(x,\emptycommand{i})\sigma^*=(x,\emptycommand{i}\sigma).\]
	Let $K=\{(\sigma^*,\sigma^*)\mid \sigma\in \operatorname{Sym}(\mathbb{Z}_\ell)\}$.
	Then
	\[
		\Aut^{\operatorname{TF}}(\Gamma)=(H\times H)\rtimes K
		\cong (\operatorname{S}_b^{\ell}\times\operatorname{S}_b^{\ell})\rtimes\operatorname{S}_\ell.
	\]
	For $b=2$ and $\ell=3$, $\Aut^{\operatorname{TF}}(\Gamma)
		\cong \mathbb{Z}_2^{6}\rtimes\operatorname{S}_3$. 
    As we shall see in Section \ref{sec:construction}, $\Gamma\cong \operatorname{PX}(2,3,1)$.
\end{example}
\begin{proof}
	Let $(\alpha,\beta)\in\Aut^{\operatorname{TF}}(\Gamma)$. For each
	$i\in\mathbb{Z}_\ell$, the set $B_{\emptycommand{i}}\alpha$ is contained in one
	part of the multipartition. Indeed, vertices in $B_{\emptycommand{i}}$ have the
	same neighbourhood, and the definition of a two-fold automorphism implies
	that their images under $\alpha$ have the same neighbourhood. The same argument applies to $\beta$.
	Thus there are permutations $p,q\in\operatorname{Sym}(\mathbb{Z}_\ell)$ such that
	\[
		B_{\emptycommand{i}}\alpha=B_{\emptycommand{i}p}
		\quad\text{and}\quad
		B_{\emptycommand{i}}\beta=B_{\emptycommand{i}q}.
	\]
	We claim that $p=q$.
	Let $\emptycommand{i}pq^{-1}=\emptycommand{j}$. Then $B_{\emptycommand{i}}\alpha=B_{\emptycommand{i}p}=B_{\emptycommand{j}q}=B_{\emptycommand{j}}\beta$.
	If $\emptycommand{i}\ne \emptycommand{j}$, then every vertex of $B_{\emptycommand{i}}$ is adjacent to every vertex of $B_{\emptycommand{j}}$,
	and hence, by the definition of a two-fold automorphism, every vertex of $B_{\emptycommand{i}}\alpha$ is adjacent to every vertex of $B_{\emptycommand{j}}\beta$.
	This implies that $\emptycommand{i}p\ne \emptycommand{j}q$, a contradiction.
	Therefore, we have $p=q$. Let $\sigma=p=q$. Then
	\[
		(\alpha,\beta)(\sigma^*,\sigma^*)^{-1}
		=(h_1,h_2)
	\]
	for some $h_1,h_2\in H$.
	The reverse inclusion is immediate, so the displayed equality follows.
	Since $H\cong\operatorname{S}_b^\ell$ and $K\cong\operatorname{S}_\ell$ acts by
	permuting the $\ell$ factors in each copy of $H$, the claimed isomorphism	follows.
\end{proof}

\subsection{Alternets and loosely attached digraphs}

    Let $\Gamma$ be a digraph, and let $(u,v)\in A\Gamma$. We call $u$ the \emph{tail} of $(u,v)$ and $v$ the \emph{head} of $(u,v)$. A vertex is a \emph{sink} if it is the head of an arc and not the tail of any arc. A vertex is a \emph{source} if it is the tail of an arc and not the head of any arc.
    A \emph{successor} of an arc $(u,v)$ is an arc $(v,w)$.
	Two arcs are \emph{related} if they have a common head or a common tail, and we denote by $R$ the transitive closure of this relation. 
    
    The \emph{alternet} of an arc $a$ is the subdigraph induced by $R(a)$. For convenience, we sometimes identify this subdigraph with the equivalence class $R(a)$ if the context is clear. An alternet is \emph{degenerate} if it contains a successor of one of its arcs.
    Clearly, a non-degenerate alternet is a bipartite digraph where one part consists of sources and the other part consists of sinks.
    For convenience, we simply call such a digraph a \emph{naturally directed bipartite graph}.
	The digraph $\Gamma$ is \emph{loosely attached} if it has no degenerate alternets and $|\mathrm{sink}(B_1)\cap\mathrm{source}(B_2)|\le 1$ for any two alternets $B_1,B_2$.  The \emph{digraph of alternets} $\mathsf{Al}(\Gamma)$ has alternets as vertices and arc $(B_1,B_2)$ if and only if $\mathrm{sink}(B_1)\cap\mathrm{source}(B_2)\neq\emptyset$. The \emph{partial line digraph} $\mathsf{Pl}(\Gamma)$ is obtained by choosing the set of arcs as vertices and defining $(a,b)$ as an arc whenever $b$ is a successor of $a$. We point out that our definition of $\mathsf{Pl}(\Gamma)$ varies slightly from that in \cite{Potocnik2010}, but these are equivalent if $\Gamma$ is asymmetric. For further details, see \cite{Potocnik2010}.

	Now we collect some results in \cite{Potocnik2010} that will be used in Section~\ref{sec:construction}.

	\begin{proposition}\label{prop:alternets}
		Let $\Gamma$ be a digraph.
		\begin{itemize}
			\item [\rm (i)] \cite[Lemma 3.1 {\rm (i)}]{Potocnik2010} If $\Gamma$ is loosely attached and $G\le\Aut(\Gamma)$, then $G\le\Aut(\mathsf{Al}(\Gamma))$.
			\item [\rm (ii)] \cite[Lemma 3.1 {\rm (ii)}, Lemma 3.2 {\rm (ii), (iii)}]{Potocnik2010} If $\Gamma$ is asymmetric without degenerate alternets, then $\mathsf{Al}(\mathsf{Pl}(\Gamma))=\Gamma$ and $\Aut(\Gamma)\le\Aut(\mathsf{Pl}(\Gamma))$.
			\item [\rm (iii)] \cite[Lemma 3.1 {\rm (ii)}, Lemma 3.3]{Potocnik2010} If $\Gamma$ is asymmetric and loosely attached, and every alternet of $\Gamma$ is complete bipartite, then $\mathsf{Pl}(\mathsf{Al}(\Gamma))=\Gamma$.
		\end{itemize}
	\end{proposition}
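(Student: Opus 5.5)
The plan is to check the three items of Proposition~\ref{prop:alternets} directly from the definitions, in the order (ii), (iii), (i); (ii) is the most mechanical, and (iii) and (i) reuse the same bookkeeping. Throughout, every vertex of $\Gamma$ has positive in- and out-valency, so every vertex is the head of some arc and the tail of some arc.

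For (ii), the vertices of $\mathsf{Pl}(\Gamma)$ are the arcs of $\Gamma$.
\begin{itemize}
\item An arc $(a,b)$ of $\mathsf{Pl}(\Gamma)$ means that $b$ is a successor of $a$, so the head of $a$ equals the tail of $b$; call this common vertex $v$.
\item Two arcs of $\mathsf{Pl}(\Gamma)$ with a common tail $a$, or with a common head $b$, have the same such vertex $v$. Hence $R$-classes in $\mathsf{Pl}(\Gamma)$ are determined by $v$.
\item Conversely, all pairs (in-arc of $v$, out-arc of $v$) are arcs of $\mathsf{Pl}(\Gamma)$, and they are mutually $R$-related. So the alternet $B_v$ attached to $v$ is the complete naturally directed bipartite graph from the in-arcs of $v$ to the out-arcs of $v$.
\item $B_v$ is non-degenerate: a successor inside $B_v$ would be an arc from an out-arc $(v,w)$ to an in-arc $(w,v)$. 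That requires a pair of opposite arcs, or a loop when $w=v$, and asymmetry excludes both.
\item Finally, $(B_u,B_v)$ is an arc of $\mathsf{Al}(\mathsf{Pl}(\Gamma))$ iff some arc of $\Gamma$ is a sink of $B_u$ (an out-arc of $u$) and a source of $B_v$ (an in-arc of $v$), i.e.\ iff $(u,v)\in A\Gamma$. So $v\mapsto B_v$ is an isomorphism $\Gamma\cong\mathsf{Al}(\mathsf{Pl}(\Gamma))$.
\end{itemize}
Any $g\in\Aut(\Gamma)$ acts on arcs and preserves the successor relation, so it induces an automorphism of $\mathsf{Pl}(\Gamma)$. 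This action is faithful because every vertex lies on an arc, which gives $\Aut(\Gamma)\le\Aut(\mathsf{Pl}(\Gamma))$.

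For (iii), I would construct the inverse correspondence.
\begin{itemize}
\item A vertex $v$ of $\Gamma$ has an in-arc, lying in some alternet $B_1$, and an out-arc, lying in some alternet $B_2$. Since two in-arcs of $v$ are related, $B_1$ is unique, and likewise $B_2$. Thus $v\in\mathrm{sink}(B_1)\cap\mathrm{source}(B_2)$ determines an arc $(B_1,B_2)$ of $\mathsf{Al}(\Gamma)$.
\item By the loosely attached condition $|\mathrm{sink}(B_1)\cap\mathrm{source}(B_2)|\le1$, this map is injective. It is surjective by the definition of the arcs of $\mathsf{Al}(\Gamma)$. So it is a bijection from $V\Gamma$ onto the vertex set of $\mathsf{Pl}(\mathsf{Al}(\Gamma))$.
\item Now $v\mapsto(B_1,B_2)$ and $w\mapsto(B_2',B_3)$ are adjacent in $\mathsf{Pl}(\mathsf{Al}(\Gamma))$ iff $B_2=B_2'$. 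That says exactly that $v$ is a source and $w$ a sink of the same alternet.
\item Since that alternet is complete bipartite, this holds iff $(v,w)\in A\Gamma$. Conversely, an arc $(v,w)$ lies in the alternet containing the out-arcs of $v$ and the in-arcs of $w$.
\end{itemize}

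For (i), an automorphism preserves heads and tails, hence the relatedness relation and its transitive closure $R$. So $G$ permutes the alternets and preserves their sink and source sets, and therefore preserves the arcs of $\mathsf{Al}(\Gamma)$. The only real point, which I expect to be the main (mild) obstacle, is faithfulness, i.e.\ reading ``$G\le\Aut(\mathsf{Al}(\Gamma))$'' as an embedding. Suppose $g$ fixes every alternet. Each vertex $v$ is the unique element of $\mathrm{sink}(B_1)\cap\mathrm{source}(B_2)$ for its pair of alternets, exactly as in (iii), so $vg=v$. Hence $g=1$. This is where both the loosely attached hypothesis and the positive in- and out-valency assumption are used.
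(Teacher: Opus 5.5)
Your proposal is correct, and it takes a genuinely different route from the paper. The paper gives no argument of its own here. It imports Lemmas~3.1--3.3 of Poto\v{c}nik--Verret and relies on its remark that their partial line digraph agrees with the paper's $\mathsf{Pl}$ on asymmetric digraphs. You instead check all three items directly from the paper's own definitions.

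The citation is shorter. Your direct verification is self-contained and gains three things:
\begin{enumerate}
\item[(a)] No transfer between the two definitions of $\mathsf{Pl}$ is needed. That transfer is not entirely automatic in (iii), because there $\mathsf{Pl}$ is applied to $\mathsf{Al}(\Gamma)$ rather than to $\Gamma$. So one must separately check that $\mathsf{Al}(\Gamma)$ is asymmetric. It is: opposite arcs $(B_1,B_2)$, $(B_2,B_1)$ would give opposite arcs of $\Gamma$ by completeness of $B_1$ and $B_2$, and a loop $(B,B)$ is impossible because the sinks and sources of a non-degenerate alternet are disjoint.
\item[(b)] It shows exactly where the standing assumption of positive in- and out-valency enters:
  \begin{itemize}
  \item in (ii), for the existence of each $B_v$ and for identifying its sinks and sources with the out- and in-arcs of $v$;
  \item in (iii), for defining $v\mapsto(B_1,B_2)$ on all of $V\Gamma$;
  \item in (i) and (ii), for faithfulness.
  \end{itemize}
\item[(c)] It shows that some hypotheses are slack. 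Your argument for (iii) never uses asymmetry. In (ii), asymmetry is used only to exclude loops, and also opposite arcs if alternets are read as vertex-induced subdigraphs.
\end{enumerate}

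One small precision in (ii). If an alternet is read as exactly the arcs of an $R$-class, then a successor pair inside $B_v$ forces the middle arc to be a loop at $v$. The configuration you describe, an arc from $(v,w)$ to $(w,v)$, lies in $B_v$ only under the vertex-induced reading. Your conclusion is correct under both readings. Also, if ``asymmetric'' is taken to forbid only two distinct opposite arcs, loops are still excluded by the hypothesis that $\Gamma$ has no degenerate alternets, since a loop is a successor of itself.
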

	
	\section{Generalized Praeger--Xu graphs}\label{sec:construction}
	
	A directed \emph{Praeger--Xu graph} $\overrightarrow{\mathrm{PX}}(v,r,s)$ is the digraph on $\mathbb{Z}_v^s\times\mathbb{Z}_r$ with arcs
    $$(x_1,\ldots,x_s,i)\to(x_2,\ldots,x_s,y,i+1),$$
    and its undirected version $\mathrm{PX}(v,r,s)$ is its underlying graph (see \cite{PX,Praeger1989high}).

	Now we generalize the construction of Praeger--Xu graphs.
	For the remainder, when the meaning is clear, we write $u\to v$ to mean that $(u,v)$ is an arc.

	\begin{construction}
    \label{construction}
  Let $\Gamma$ be a digraph and let $s\ge 1$.
  Define a digraph $\Gamma^{*s}$ as follows:
  \begin{itemize}
    \item [\rm (i)] $V\Gamma^{*s}=(V\Gamma)^s$; and
    \item [\rm (ii)]
      $(v_1,\ldots,v_s)\to(w_1,\ldots,w_s)$ if and only if 
      \[
        (v_1,w_s)\in A\Gamma\quad\text{and}\quad w_i=v_{i+1}\ \text{for all } i\in\{1,\ldots,s-1\}.
      \]
  \end{itemize}
  In particular, $\Gamma^{*1}=\Gamma$.
  For $r\ge 3$, define $\overrightarrow{\operatorname{PX}}(\Gamma,r,s)=\Gamma^{*s}\times\overrightarrow{C_r}$ and define that $\operatorname{PX}(\Gamma,r,s)$ is the underlying graph of $\overrightarrow{\operatorname{PX}}(\Gamma,r,s)$.
  When $r=s$, we write $\Gamma[r]$ for this digraph and write $\Gamma\{r\}$ for the underlying graph of $\Gamma[r]$.
\end{construction}

	\begin{example}
    A \emph{De Bruijn digraph} $\overrightarrow{B}(m,n)$ is defined as the digraph on $[m]^n$ with arcs $(x_1,\ldots,x_n)\to(x_2,\ldots,x_n,y)$.
  Let $\Gamma$ be the digraph with vertex set $V\Gamma$ and arc set $V\Gamma\times V\Gamma$, and let $m=|V\Gamma|$.
  Then for $s\ge 1$, $\Gamma^{*s}=\overrightarrow{B}(m,s)=\mathsf{Pl}^{s-1}(\Gamma)$.
  One can also check that the directed Praeger-Xu graph $\overrightarrow{\mathrm{PX}}(m,r,s)=\Gamma^{*s}\times\overrightarrow{C}_r=\overrightarrow{\mathrm{PX}}(\Gamma,r,s)$ for $r\ge 3$.
  The underlying graph of $\overrightarrow{\operatorname{PX}}(m,r,s)$ is exactly the Praeger-Xu graph $\operatorname{PX}(m,r,s)$.
  Thus, Construction~\ref{construction} extends the family of directed Praeger-Xu graphs, and  also extends the family of Praeger-Xu graphs.
\end{example}

Now we introduce the cycle-shift action.

\begin{defi}\label{def:twist-action}
  Let $G$ act on a set $\Omega$, and let $s,r\ge 1$.
  We define an action of $G\wr\mathbb{Z}_r$ on $\Omega^s\times\mathbb{Z}_r$ by
  \[
    (w_1,\ldots,w_s,\emptycommand{i})(g_1,\ldots,g_r,\emptycommand{j})
    =(w_1g_{i+1},\ldots,w_sg_{i+s},\emptycommand{i+j}),
  \]
  for all $(w_1,\ldots,w_s,\emptycommand{i})\in \Omega^s\times\mathbb{Z}_r$ and
  $(g_1,\ldots,g_r,\emptycommand{j})\in G\wr\mathbb{Z}_r$ (indices modulo $r$).
  We call this the \emph{cycle-shift action} of
  $G\wr\mathbb{Z}_r$ on $\Omega^s\times\mathbb{Z}_r$.
\end{defi}

\begin{lemma}
  The cycle-shift action in Definition~\ref{def:twist-action} is well-defined.
\end{lemma}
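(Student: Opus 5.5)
The plan is to verify the three things that make the formula in Definition~\ref{def:twist-action} a right action: the right-hand side does not depend on the choice of representatives, the identity acts trivially, and the compatibility law $(x\cdot a)\cdot b=x\cdot(ab)$ holds for the multiplication of $G\wr\mathbb{Z}_r$. Before doing so I will fix the multiplication rule of the regular wreath product. With right actions and the base group $G^r$ indexed by $\mathbb{Z}_r$, the rule I will use is
\[
(g_1,\ldots,g_r,j)(h_1,\ldots,h_r,k)=(g_1h_{1+j},\ldots,g_rh_{r+j},j+k),
\]
so that the top element $j$ shifts coordinates by $j$. The whole lemma hinges on matching this convention to the formula. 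If the paper intends the other convention, the same argument works after replacing $j$ by $-j$ in the shift.

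\emph{Independence of representatives.} The point $i$ is a class modulo $r$, and the indices $i+t$ for $t=1,\ldots,s$ are read modulo $r$. Since the coordinates of $(g_1,\ldots,g_r)$ are indexed by $\mathbb{Z}_r$, each $g_{i+t}$ depends only on the class of $i+t$. The same holds for $i+j$. No condition relating $s$ and $r$ is needed: when $s>r$ the same $g$ is simply used at several positions. So the image lies in $\Omega^s\times\mathbb{Z}_r$ and is uniquely determined. The identity $(1,\ldots,1,0)$ clearly fixes every point.

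\emph{Compatibility.} Let $x=(w_1,\ldots,w_s,i)$. Applying $a=(g_1,\ldots,g_r,j)$ gives
\[
(w_1g_{i+1},\ldots,w_sg_{i+s},i+j).
\]
Then applying $b=(h_1,\ldots,h_r,k)$ gives the tuple whose $t$-th coordinate is $w_tg_{i+t}h_{i+j+t}$ and whose last entry is $i+j+k$. On the other hand, $ab$ has $m$-th base coordinate $g_mh_{m+j}$ and top element $j+k$. Hence $x\cdot(ab)$ has $t$-th coordinate $w_tg_{i+t}h_{i+t+j}$ and last entry $i+j+k$, which agrees with the previous computation.

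The main (and only) subtle step is this index bookkeeping: the shift by $j$ in the product must exactly match the shift of the cycle coordinate from $i$ to $i+j$. Everything else is routine.
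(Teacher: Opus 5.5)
Your proposal is correct and follows the paper's proof essentially verbatim. The paper uses the same product rule $(g_1,\ldots,g_r,j)(h_1,\ldots,h_r,\ell)=(g_1h_{1+j},\ldots,g_rh_{r+j},j+\ell)$, checks that the identity acts trivially, and verifies $(xa)b=x(ab)$ coordinatewise, getting $w_tg_{i+t}h_{i+t+j}$ in both computations exactly as you do; your extra remark on independence of representatives is harmless.

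One minor caveat: under the opposite convention $g_mh_{m-j}$, the displayed formula would genuinely fail to be an action. In that case you would have to change the action formula itself, not merely rerun the argument, but this does not affect the convention actually used here.
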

\begin{proof}
  Let $(g_1,\ldots,g_r,\emptycommand{j}), (h_1,\ldots,h_r,\emptycommand{\ell})\in G\wr\mathbb{Z}_r$.
  Using indices modulo $r$, their product is
  \[
    (g_1,\ldots,g_r,\emptycommand{j})(h_1,\ldots,h_r,\emptycommand{\ell})
    =(g_1h_{1+j},\ldots,g_rh_{r+j},\emptycommand{j+\ell}).
  \]
  For $(w_1,\ldots,w_s,\emptycommand{i})\in\Omega^s\times\mathbb{Z}_r$, we compute
  \[\begin{aligned}
    &\bigl((w_1,\ldots,w_s,\emptycommand{i})(g_1,\ldots,g_r,\emptycommand{j})\bigr)
    (h_1,\ldots,h_r,\emptycommand{\ell})\\
    =&(w_1g_{i+1},\ldots,w_sg_{i+s},\emptycommand{i+j})(h_1,\ldots,h_r,\emptycommand{\ell})\\
    =&(w_1g_{i+1}h_{i+1+j},\ldots,w_sg_{i+s}h_{i+s+j},\emptycommand{i+j+\ell}),
  \end{aligned}
  \]
  which equals
  \[
    (w_1,\ldots,w_s,\emptycommand{i})
    (g_1h_{1+j},\ldots,g_rh_{r+j},\emptycommand{j+\ell}).
  \]
  Thus the action is associative. The identity element
  $(1,\ldots,1,\emptycommand{0})$ acts trivially, so this is a well-defined group action.
\end{proof}

\subsection{Basic properties}

In this section, we give some basic properties regarding $\overrightarrow{\operatorname{PX}}(\Gamma,r,s)$ and cycle-shift actions.

\begin{lemma}\label{lem:cycle-shift}
		Let $G$ be a transitive permutation group on $\Omega$ and $|\Omega|>1$, and let
		$G\wr\mathbb{Z}_r$ act on $\Omega^s\times\mathbb{Z}_r$ via the cycle-shift action. Let $w\in\Omega$.
		This action is faithful. Moreover:
		\begin{itemize}
			\item [\rm (i)] the stabilizer of $\eta=(w,\ldots,w,\emptycommand{0})\in \Omega^s\times\mathbb{Z}_r$ is $G_w^s\times G^{r-s}$ if $s\le r$ and is $G_w^r$ if $s>r$;
			\item [\rm (ii)] $r\ge s$ if and only if $G\wr\mathbb{Z}_r$ is transitive;
			\item [\rm (iii)] if $s\mid r$, then $G$ has a regular subgroup if and only if $G\wr\mathbb{Z}_r$ has a regular subgroup.
		\end{itemize}
	\end{lemma}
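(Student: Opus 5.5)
The plan is to work directly from the formula in Definition~\ref{def:twist-action}. The guiding fact is that the point $(w_1,\ldots,w_s,i)$ is moved by the coordinates $g_{i+1},\ldots,g_{i+s}$ (indices modulo $r$); when $s\ge r$ these indices cover all of $\mathbb{Z}_r$.

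\emph{Faithfulness.} Suppose $(g_1,\ldots,g_r,j)$ acts trivially.
\begin{itemize}
\item Comparing last coordinates forces $j=0$.
\item For each $k\in\mathbb{Z}_r$, take $i=k-1$. The first coordinate of the image of $(w_1,\ldots,w_s,k-1)$ is $w_1g_k$. Letting $w_1$ range over $\Omega$ gives $g_k=1$, since $G\le\operatorname{Sym}(\Omega)$.
\end{itemize}
Hence the element is the identity.

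\emph{Part (i).} An element fixing $\eta=(w,\ldots,w,0)$ must have $j=0$ and $wg_{1+t}=w$ for $t=0,\ldots,s-1$, i.e.\ for the indices $1,\ldots,s$ modulo $r$.
\begin{itemize}
\item If $s\le r$, these indices are distinct. The conditions say exactly that $g_1,\ldots,g_s\in G_w$, while $g_{s+1},\ldots,g_r$ are arbitrary. This gives $G_w^s\times G^{r-s}$.
\item If $s>r$, the indices cover every residue. The condition is then $g_k\in G_w$ for all $k$, giving $G_w^r$.
\end{itemize}

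\emph{Part (ii).} Suppose $r\ge s$. Since $G$ is transitive, for any target $(x_1,\ldots,x_s,i)$ we first use the $\mathbb{Z}_r$-component to move the last coordinate to $i$. We then choose $g_{i+1},\ldots,g_{i+s}$ independently (these indices are distinct modulo $r$) so that $wg_{i+t}=x_t$. So $\eta$ reaches every point.

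Conversely, suppose $s>r$. Take $x\ne y$ in $\Omega$, using $|\Omega|>1$. Consider the point $(x,\ldots,x,y,x,\ldots)$ whose coordinates $1$ and $1+r$ differ. Every point in the orbit of $\eta$ has $w_t$ and $w_{t+r}$ equal, because both are the image of $w$ under the same $g_{i+t}$. So this point is not in the orbit, and the action is intransitive. Alternatively one can count: $|G|^r r/|G_w|^r=|\Omega|^r r<|\Omega|^s r$.

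\emph{Part (iii).} Assume $s\mid r$. If $R\le G$ is regular on $\Omega$, I would take $R\wr\mathbb{Z}_r$ rather than the too-large $R^r\rtimes\mathbb{Z}_r$. Its order $|\Omega|^r r$ exceeds $|\Omega|^s r$ unless $r=s$, so a smaller subgroup is needed. The candidate is
\[
X=\{(g_1,\ldots,g_r,j): g_k\in R,\ g_k=g_{k+s}\text{ for all }k\}\rtimes\langle(1,\ldots,1,1)\rangle,
\]
where the condition $g_k=g_{k+s}$ is well defined because $s\mid r$. The first factor is isomorphic to $R^s$, and it is normalized by the shift, since the shift preserves $s$-periodicity. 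So $|X|=|R|^s r=|\Omega|^s r$.

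For transitivity, the stabilizer of $\eta$ in $X$ requires $j=0$ and $g_1,\ldots,g_s\in R_w=1$. By periodicity all $g_k=1$, so the stabilizer is trivial. Combined with the order count, $X$ is regular.

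For the converse, suppose $Y\le G\wr\mathbb{Z}_r$ is regular. Project $Y$ to the first coordinate of the point, or better, consider the map $\Omega^s\times\mathbb{Z}_r\to\Omega$ sending a point to its first coordinate. My first attempt would be the following. Take $Y_0=Y\cap G^r$, the kernel of $Y\to\mathbb{Z}_r$. It acts regularly on the fibre $\Omega^s\times\{0\}$, since regularity of $Y$ together with the projection being onto $\mathbb{Z}_r$ forces $|Y_0|=|\Omega|^s$. Then the projection of $Y_0$ to its coordinate $g_1$ gives a subgroup $P\le G$ that is transitive on $\Omega$.

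To get regularity I would instead look at the subgroup $Q\le Y_0$ of elements with $g_2,\ldots,g_s$ fixing $w$. This subgroup acts regularly on $\Omega\times\{(w,\ldots,w)\}\times\{0\}$, and the map $Q\to G$, $q\mapsto g_1$, is injective on $Q$ by regularity. Its image is then a subgroup of $G$ of order $|\Omega|$ acting transitively, hence regularly.

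This converse direction is the main obstacle. One has to check that the image is transitive and that the map is injective, and both need care because different elements of $Q$ might share $g_1$ while differing elsewhere. If this fails, I would fall back on showing that the orbit of the first coordinate yields a regular section.
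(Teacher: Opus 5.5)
Your proposal is correct. Faithfulness, part (i), the forward direction of (ii), and the construction in the forward direction of (iii) agree with the paper. Your first sentence in (iii) contrasts $R\wr\mathbb{Z}_r$ with $R^r\rtimes\mathbb{Z}_r$, which are the same group, but the subgroup $X$ you actually build is exactly the paper's $D\rtimes\mathbb{Z}_r$.

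The genuine differences are in the two converses. In (ii), the paper uses only the count from (i): for $s>r$, $|W|/|W_\eta|=|\Omega|^r r<|\Omega|^s r$. Your invariant, that every image of $\eta$ has equal entries in positions $t$ and $t+r$, is an equally valid and more explicit obstruction.

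In (iii), the paper takes $K=B\cap G^r$, notes that it yields a regular subgroup of $G^s$ on $\Omega^s$, and then quotes Lemma~2.2 of Dobson et al.\ to pass down to $G$. Your subgroup $Q$ performs that descent directly, and none of the caveats you raise is a real obstacle:
\begin{enumerate}
\item If $y=(g_1,\ldots,g_r,j)\in Y$ maps $(x,w,\ldots,w,0)$ to $(x',w,\ldots,w,0)$, then $j=0$ and $g_2,\ldots,g_s\in G_w$. So $y\in Q$, and $Q$ is transitive on the fibre.
\item An element of $Q$ with $g_1=1$ fixes every point of the fibre, hence is trivial because $Y$ is semiregular. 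So $q\mapsto g_1$ is injective.
\item The map $q\mapsto g_1$ is a homomorphism, since $Q$ lies in the base group $G^r$, where multiplication is coordinatewise.
\end{enumerate}
Hence the image is a transitive subgroup of $G$ of order $|Q|=|\Omega|$, i.e.\ a regular subgroup, and no fallback is needed.

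Your route is self-contained and elementary, at the price of a few extra lines; in effect it proves the special case of the cited lemma that is needed. The paper's route is shorter but depends on that external result.
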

\begin{proof}
	Let $W=G\wr\mathbb{Z}_r=G^r\rtimes\mathbb{Z}_r$, and write elements of $W$ as
	$(g_1,\ldots,g_r,\emptycommand{j})$.

	First we prove faithfulness.
	Assume $(g_1,\ldots,g_r,\emptycommand{j})\in W$ acts trivially on $\Omega^s\times\mathbb{Z}_r$.
	From the last coordinate we get $\emptycommand{j}=\emptycommand{0}$.
	Fix $t\in\{1,\ldots,r\}$, choose $i\equiv t-1\pmod r$, and act on
	$(w_1,\ldots,w_s,\emptycommand{i})$.
	The first coordinate becomes $w_1g_{i+1}=w_1g_t$, so $w_1g_t=w_1$ for all
	$w_1\in\Omega$, hence $g_t=1$.
	As $t$ is arbitrary, all $g_t=1$, so the action is faithful.

	An element $(g_1,\ldots,g_r,\emptycommand{j})$ fixes $\eta=(w,\ldots,w,\emptycommand{0})$ if and only if
	$\emptycommand{j}=\emptycommand{0}$ and $wg_t=w$ for every index $t$ appearing among
	$1,\ldots,s$ modulo $r$.
	If $s\le r$, these are exactly $t=1,\ldots,s$, so the stabilizer is
	$G_w^s\times G^{r-s}$.
	If $s>r$, every residue class modulo $r$ appears among $1,\ldots,s$, so each
	$g_t\in G_w$ and the stabilizer is $G_w^r$.
	This proves \textup{(i)}.

	For \textup{(ii)}, assume first $r\ge s$.
	Given $(u_1,\ldots,u_s,\emptycommand{i})$ and $(v_1,\ldots,v_s,\emptycommand{\ell})$ in
	$\Omega^s\times\mathbb{Z}_r$, choose $h_{i+t}\in G$ (indices modulo $r$) such that
	$u_th_{i+t}=v_t$ for $t=1,\ldots,s$. This is possible since $G$ is transitive.
	Choose the remaining coordinates of $h\in G^r$ arbitrarily.
	Then
	\[
		(u_1,\ldots,u_s,\emptycommand{i})(h_1,\ldots,h_r,\emptycommand{\ell-i})=(v_1,\ldots,v_s,\emptycommand{\ell}),
	\]
	so $W$ is transitive.
		Conversely, assume that $W$ is transitive.
		Then
		\[
			|W|/|W_{\eta}|=|\Omega^s\times\mathbb{Z}_r|=|\Omega|^s r.
		\]
		From \textup{(i)} we have
		$|W|/|W_{\eta}|\le |\Omega|^r r,$
		and hence $|\Omega|^s\le |\Omega|^r$.
        As $|\Omega|>1$, we have $s\le r$.
		Thus $W$ is transitive if and only if $r\ge s$.

	For \textup{(iii)}, assume that $s\mid r$.
	If $H\le G$ is regular on $\Omega$, let
	\[
		D=\{(h_1,\ldots,h_r)\in H^r\mid h_{i+s}=h_i\text{ for all }i\},
	\]
	where the subscripts are taken modulo $r$.
	Then $D\cong H^s$, and $D$ is normalized by the cyclic permutation of the
	$r$ coordinates. Thus $D\rtimes\mathbb{Z}_r\le G\wr\mathbb{Z}_r$ and acts regularly on
	$\Omega^s\times\mathbb{Z}_r$.

	Conversely, let $B\le G\wr\mathbb{Z}_r$ be regular on
	$\Omega^s\times\mathbb{Z}_r$. The projection of $B$ onto
	$\mathbb{Z}_r$ is transitive, and hence is all of $\mathbb{Z}_r$.
	Put $K=B\cap G^r$. Then
	\[
		|K|=\frac{|B|}{r}=|\Omega|^s.
	\]
	Moreover, $K$ acts semiregularly on the layer
	$\Omega^s\times\{0\}$, and hence regularly on this layer.
	Therefore $G^s$ contains a regular subgroup on
	$\Omega^s$. By \cite[Lemma 2.2]{dobson2023When}, this implies that $G$
	contains a regular subgroup.
\end{proof}

\subsection{Automorphism groups}

In this section, we compute $\Aut(\overrightarrow{\mathrm{PX}}(\Gamma,r,s))$ for $s\ge 2$.
First, we need some definitions and lemmas.

\begin{defi}
  Let $\Gamma$ be a digraph.
  Let $\Sigma=\overrightarrow{\mathrm{PX}}(\Gamma,r,s)$ where $s\ge 2$.
  Identify $V\Gamma\times(V\Gamma)^{s-1}$ with $(V\Gamma)^s$
  by $(x,(v_1,\ldots,v_{s-1}))=(x,v_1,\ldots,v_{s-1})$ and 
  identify $(V\Gamma)^{s-1}\times V\Gamma$ with $(V\Gamma)^s$ by
  $((v_1,\ldots,v_{s-1}),y)=(v_1,\ldots,v_{s-1},y)$.
  For $w=(v_1,\ldots,v_{s-1})\in (V\Gamma)^{s-1}$ and $\emptycommand{i}\in\mathbb{Z}_r$, define subsets
  $B_{w,\emptycommand{i}},C_{w,\emptycommand{i}}\subseteq V\Sigma$ by 
  \[
    B_{w,\emptycommand{i}}:=\{(x,w,\emptycommand{i})\in (V\Gamma)^s\times\mathbb{Z}_r\mid x\in V\Gamma\}
  \]
  and
  \[
    C_{w,\emptycommand{i}}:=\{(w,x,\emptycommand{i})\in (V\Gamma)^s\times\mathbb{Z}_r\mid x\in V\Gamma\}.
  \]
\end{defi}

\begin{lemma}\label{lem:loosely-attached}
  Let $\Gamma$ be a digraph, and assume that
  $\Gamma\times\overrightarrow{K}_2$ is connected.
  Let $s\ge 2$ and $\Sigma=\overrightarrow{\mathrm{PX}}(\Gamma,r,s)$.
  Then every alternet of $\Sigma$ has the form
  $\Sigma[B_{w,\emptycommand{i}}\cup C_{w,\emptycommand{i+1}}]=\Gamma\times\overrightarrow{K}_2$ for some
  $w\in (V\Gamma)^{s-1}$ and $\emptycommand{i}\in\mathbb{Z}_r$.
  In particular, $\Sigma$ is loosely attached.
\end{lemma}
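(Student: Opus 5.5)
Write $\Sigma=\overrightarrow{\mathrm{PX}}(\Gamma,r,s)$ with $s\ge 2$. Unwinding Construction~\ref{construction}, the arcs of $\Sigma$ are exactly
\[
(x,w,\emptycommand{i})\to(w,y,\emptycommand{i+1})\qquad\text{with } w\in(V\Gamma)^{s-1},\ (x,y)\in A\Gamma .
\]
So every arc has its tail in some $B_{w,\emptycommand{i}}$ and its head in $C_{w,\emptycommand{i+1}}$, with the same $w$. The plan is to show that the relation $R$ cannot leave the set of arcs running from $B_{w,\emptycommand{i}}$ to $C_{w,\emptycommand{i+1}}$, and that inside this set it is a single class.

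\textbf{Closure.} Take an arc $a$ with tail $(x,w,\emptycommand{i})$. Every arc sharing this tail has its head in $C_{w,\emptycommand{i+1}}$. Likewise, every arc sharing the head $(w,y,\emptycommand{i+1})$ of $a$ has its tail of the form $(x',w,\emptycommand{i})\in B_{w,\emptycommand{i}}$: since $s\ge2$, the last $s-1$ coordinates of the tail are forced to equal $w$, and the layer is forced to be $\emptycommand{i}$. Hence $R(a)$ is contained in the set of arcs from $B_{w,\emptycommand{i}}$ to $C_{w,\emptycommand{i+1}}$.

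\textbf{Identification with $\Gamma\times\overrightarrow{K}_2$.} Map $(x,w,\emptycommand{i})\mapsto(x,0)$ and $(w,y,\emptycommand{i+1})\mapsto(y,1)$. Under this map, the arcs of $\Sigma$ between $B_{w,\emptycommand{i}}$ and $C_{w,\emptycommand{i+1}}$ correspond exactly to the arcs $(x,0)\to(y,1)$ with $(x,y)\in A\Gamma$, i.e.\ to the arcs of $\Gamma\times\overrightarrow{K}_2$. The sets $B_{w,\emptycommand{i}}$ and $C_{w,\emptycommand{i+1}}$ are disjoint, because $r\ge3$ gives $\emptycommand{i}\ne\emptycommand{i+1}$. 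Are there other arcs inside $B_{w,\emptycommand{i}}\cup C_{w,\emptycommand{i+1}}$? Every arc goes from layer $\emptycommand{j}$ to layer $\emptycommand{j+1}$, so an arc from $C_{w,\emptycommand{i+1}}$ back to $B_{w,\emptycommand{i}}$ would need $\emptycommand{i+2}=\emptycommand{i}$, which $r\ge 3$ excludes. Arcs inside a single layer do not exist. Thus the induced subdigraph is exactly $\Gamma\times\overrightarrow{K}_2$.

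\textbf{Single class.} Two arcs of $\Gamma\times\overrightarrow{K}_2$ are related iff they share a tail or a head, i.e.\ iff they are adjacent in the line graph of its underlying graph. Since $\Gamma\times\overrightarrow{K}_2$ is connected and has no isolated vertices (positive valencies), its arcs form one $R$-class. Hence the alternet of $a$ is $\Sigma[B_{w,\emptycommand{i}}\cup C_{w,\emptycommand{i+1}}]$.

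\textbf{Loosely attached.} These alternets are naturally directed bipartite graphs: sources are $B_{w,\emptycommand{i}}$ (every vertex has positive out-valency) and sinks are $C_{w,\emptycommand{i+1}}$. An alternet is degenerate if it contains a successor of one of its arcs, and this is impossible since no arc leaves $C_{w,\emptycommand{i+1}}$ inside it, so no alternet is degenerate. For two alternets $B_1=(w,\emptycommand{i})$ and $B_2=(w',\emptycommand{j})$, the set $\mathrm{sink}(B_1)\cap\mathrm{source}(B_2)$ equals $C_{w,\emptycommand{i+1}}\cap B_{w',\emptycommand{j}}$. This is empty unless $\emptycommand{j}=\emptycommand{i+1}$; when it is nonempty, a vertex $(w,y)=(x',w')$ determines all $s$ coordinates, so the intersection has at most one vertex. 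Hence $\Sigma$ is loosely attached.

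The only delicate point is the closure step: it relies on $s\ge2$ to pin down $w$ from both endpoints, and on $r\ge3$ to exclude stray arcs and identifications between layers. The connectivity hypothesis is used only to merge all the arcs inside one such block into a single alternet.
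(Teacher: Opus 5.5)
Your proposal is correct and follows the same route as the paper. You show that $R$ is closed within the arcs from $B_{w,i}$ to $C_{w,i+1}$, identify this block with the connected digraph $\Gamma\times\overrightarrow{K}_2$ so that it forms a single alternet, and note that a vertex is determined by its length-$(s-1)$ prefix and suffix; you are also more careful than the paper in explicitly checking non-degeneracy and using $r\ge 3$ to rule out stray arcs, both of which the paper leaves implicit.
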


\begin{proof}
  Let $\mathcal{A}$ be an alternet of $\Sigma$, and choose an arc
  \[
    (u_1,\ldots,u_s,\emptycommand{i})\to(v_1,\ldots,v_s,\emptycommand{i+1})
  \]
  in $\mathcal{A}$.
  Set $w=(u_2,\ldots,u_s)=(v_1,\ldots,v_{s-1})$.
  Then this arc lies in
  $\Sigma[B_{w,\emptycommand{i}}\cup C_{w,\emptycommand{i+1}}]$.
  By the definition of alternet (closure under sharing a tail or sharing a head),
  every arc in $\mathcal{A}$ must still have tail in $B_{w,\emptycommand{i}}$ and head in
  $C_{w,\emptycommand{i+1}}$.
  So $\mathcal{A}\subseteq \Sigma[B_{w,\emptycommand{i}}\cup C_{w,\emptycommand{i+1}}]$.
  Note that $\Sigma[B_{w,\emptycommand{i}}\cup C_{w,\emptycommand{i+1}}]=\Gamma\times\overrightarrow{K}_2$ is connected.
  Thus $\mathcal{A}=\Sigma[B_{w,\emptycommand{i}}\cup C_{w,\emptycommand{i+1}}]$ and every alternet has the required form.

  Finally, if two alternets are distinct, then their $B$-parts are distinct
  and their $C$-parts are also distinct, and they intersect in at most one
  vertex (a vertex is determined by its prefix and suffix of length $s-1$).
  Hence distinct alternets intersect in at most one vertex.
  Therefore $\Sigma$ is loosely attached.
\end{proof}

\begin{remark}\label{rmk:connected-alternets}
    Let $\Gamma$ be a digraph and $\Sigma$ be the graph whose vertex set is $V\Gamma$ and two vertices are adjacent if and only if they share a common out-neighbour in $\Gamma$.
    We point out that if $\Gamma\times\overrightarrow{K}_2$ is connected, then $\Gamma$ has no sinks and sources, and $\Sigma$ is connected.
    Conversely, if $\Gamma$ has no sinks and sources, and $\Sigma$ is connected, then $\Gamma\times\overrightarrow{K}_2$ is connected (see \cite{Chen2015Weak}).
\end{remark}

\begin{corollary}\label{cor:alternet-graph}
  Let $\Gamma$ be a digraph, let $m=|V\Gamma|$, and assume that
  $\Gamma\times\overrightarrow{K}_2$ is connected.
  Let $s\ge 2$ and $\Sigma=\overrightarrow{\mathrm{PX}}(\Gamma,r,s)$.
  Then $\mathsf{Al}(\Sigma)=\overrightarrow{\mathrm{PX}}(m,r,s-1)$.
\end{corollary}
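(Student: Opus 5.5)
By Lemma~\ref{lem:loosely-attached}, the alternets of $\Sigma$ are exactly the subdigraphs $\mathcal{A}_{w,i}:=\Sigma[B_{w,i}\cup C_{w,i+1}]$ with $w\in(V\Gamma)^{s-1}$ and $i\in\mathbb{Z}_r$. The plan is to use the labelling $\mathcal{A}_{w,i}\mapsto (w,i)\in (V\Gamma)^{s-1}\times\mathbb{Z}_r$ and show two things: it is a bijection from the set of alternets onto the vertex set of $\overrightarrow{\mathrm{PX}}(m,r,s-1)$ (after identifying $V\Gamma$ with $\mathbb{Z}_m$), and it carries the arcs of $\mathsf{Al}(\Sigma)$ exactly onto the arcs $(x_1,\ldots,x_{s-1},i)\to(x_2,\ldots,x_{s-1},y,i+1)$.

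\emph{Bijectivity.} Each pair $(w,i)$ gives an alternet, since $\Gamma$ has no sinks or sources (Remark~\ref{rmk:connected-alternets}). Hence $B_{w,i}$ contains the tail of some arc, and $\mathcal{A}_{w,i}$ is nonempty. Distinct pairs give distinct alternets, because the source set $\mathrm{source}(\mathcal{A}_{w,i})=B_{w,i}$ determines both $w$ and $i$. Here I would note that every vertex of $B_{w,i}$ is actually a tail in the alternet, again because $\Gamma$ has no sources. Likewise, every vertex of $C_{w,i+1}$ is a head in it, since $\Gamma$ has no sinks.

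\emph{Arcs.} By definition, $(\mathcal{A}_{w,i},\mathcal{A}_{w',j})$ is an arc of $\mathsf{Al}(\Sigma)$ if and only if $C_{w,i+1}\cap B_{w',j}\neq\emptyset$. A vertex $(w,x,i+1)=(z,w',j)$ lies in this intersection exactly when $j=i+1$ and the length-$s$ word $w x$ equals $z w'$. Writing $w=(x_1,\ldots,x_{s-1})$, this says $w'=(x_2,\ldots,x_{s-1},x)$ and $z=x_1$. Such a vertex therefore exists for any $w$ and any $y:=x$, so the arcs are precisely $(x_1,\ldots,x_{s-1},i)\to(x_2,\ldots,x_{s-1},y,i+1)$ with $y$ arbitrary. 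This is the arc set of $\overrightarrow{\mathrm{PX}}(m,r,s-1)$, with $\Gamma^{*(s-1)}$ replaced by the complete digraph with loops. In the edge case $s=2$, $w$ is a single vertex, and the arc condition reduces to $(x_1,i)\to(y,i+1)$ for all $y$, which matches $\overrightarrow{\mathrm{PX}}(m,r,1)=K_m^{\circ}\times\overrightarrow{C}_r$.

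The only real subtlety is the identification of source and sink sets of alternets with the full sets $B_{w,i}$ and $C_{w,i+1}$. This is where the hypothesis that $\Gamma\times\overrightarrow{K}_2$ is connected is used, through the absence of sinks and sources. The remaining steps are bookkeeping with the shift of coordinates.
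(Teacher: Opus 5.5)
Your proposal is correct and is essentially the paper's proof: you label the alternets by $(w,i)$ via Lemma~\ref{lem:loosely-attached} and then check that $C_{w,i+1}\cap B_{w',j}\neq\emptyset$ exactly when $j=i+1$ and $w'$ is obtained from $w$ by dropping its first entry and appending an arbitrary last entry (with no condition on $w'$ when $s=2$). Your extra check that the sink and source sets of $\mathcal{A}_{w,i}$ are all of $C_{w,i+1}$ and $B_{w,i}$ is a detail the paper leaves implicit. A small correction to your closing remark: the connectivity hypothesis does its real work inside Lemma~\ref{lem:loosely-attached}, where it makes each $\Sigma[B_{w,i}\cup C_{w,i+1}]$ a single alternet; the absence of sinks and sources is already a standing assumption of the paper.
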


\begin{proof}
  By the previous lemma, every alternet of $\Sigma$ has the form
  \[
    \mathcal{A}_{w,\emptycommand{i}}=\Sigma[B_{w,\emptycommand{i}}\cup C_{w,\emptycommand{i+1}}]
  \]
  for a unique pair $(w,\emptycommand{i})\in (V\Gamma)^{s-1}\times\mathbb{Z}_r$.
  Hence
  \[
    V\mathsf{Al}(\Sigma)=\{\mathcal{A}_{w,\emptycommand{i}}\mid (w,\emptycommand{i})\in (V\Gamma)^{s-1}\times\mathbb{Z}_r\}.
  \]

  Define
  \[
    \phi:V\mathsf{Al}(\Sigma)\to (V\Gamma)^{s-1}\times\mathbb{Z}_r,
    \qquad \phi(\mathcal{A}_{w,\emptycommand{i}})=(w,\emptycommand{i}).
  \]
  This is a bijection.

  Let $w=(w_1,\ldots,w_{s-1})$ and $u=(u_1,\ldots,u_{s-1})$.
  By construction of the sets $B_{w,\emptycommand{i}}$ and $C_{u,\emptycommand{i+1}}$,
  we have that if $s\ge 3$, then
  \[
    C_{w,\emptycommand{i+1}}\cap B_{u,\emptycommand{i+1}}\neq\emptyset
    \iff (u_1,\ldots,u_{s-2})=(w_2,\ldots,w_{s-1});
  \]
  if $s=2$, then $C_{w,\emptycommand{i+1}}\cap B_{u,\emptycommand{i+1}}\neq\emptyset$ for every $w,u\in V\Gamma$.
  Equivalently, in $\mathsf{Al}(\Sigma)$ there is an arc
  \[
    \mathcal{A}_{w,\emptycommand{i}}\to\mathcal{A}_{u,\emptycommand{i+1}}
  \]
  if and only if in $\overrightarrow{\mathrm{PX}}(m,r,s-1)$,
  \[
    \phi(\mathcal{A}_{w,\emptycommand{i}})
    \to \phi(\mathcal{A}_{u,\emptycommand{i+1}}).
  \]
  Therefore $\mathsf{Al}(\Sigma)= \overrightarrow{\mathrm{PX}}(m,r,s-1)$.
\end{proof}
	
Note that for $s=1$, $\overrightarrow{PX}(m,r,1)=\overrightarrow{C}_r[\overline{K_m}]$.
Hence $\Aut(\overrightarrow{\mathrm{PX}}(m,r,1))=\mathrm{S}_m\wr\mathbb{Z}_r$.
\begin{lemma}\label{lem:auto-of-De}
  Let $\Sigma=\overrightarrow{\operatorname{PX}}(m,r,s)$.
  Then $\Aut(\Sigma)=\mathrm{S}_m\wr \mathbb{Z}_r$. Moreover, if
  $\sigma=(\sigma_1,\ldots,\sigma_r,\emptycommand{j})\in\Aut(\Sigma)$ and
  $(v_1,\ldots,v_s,\emptycommand{i})\in V\Sigma$, then
  \[(v_1,\ldots,v_s,\emptycommand{i})\sigma=(v_1\sigma_{i+1},\ldots,v_s\sigma_{i+s},\emptycommand{i+j}).\]
\end{lemma}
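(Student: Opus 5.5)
Induction on $s$, with the alternet machinery of Proposition~\ref{prop:alternets} and Corollary~\ref{cor:alternet-graph} as the engine. Let $K$ be the complete digraph with loops on $m$ vertices, so that $\Sigma=\overrightarrow{\operatorname{PX}}(K,r,s)$. Since $K\times\overrightarrow{K}_2$ is the complete naturally directed bipartite graph $K_{m,m}$, it is connected (for $m\ge 1$), and Lemma~\ref{lem:loosely-attached} and Corollary~\ref{cor:alternet-graph} apply.

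First I show that $\mathrm{S}_m\wr\mathbb{Z}_r$, acting by the cycle-shift action of Definition~\ref{def:twist-action}, lies in $\Aut(\Sigma)$, and that this action is faithful by Lemma~\ref{lem:cycle-shift}. Take an arc $(v_1,\ldots,v_s,i)\to(v_2,\ldots,v_s,y,i+1)$. Its image is
\[
(v_1\sigma_{i+1},\ldots,v_s\sigma_{i+s},i+j)\to(v_2\sigma_{i+2},\ldots,v_s\sigma_{i+s},y\sigma_{i+s+1},i+j+1),
\]
which is again an arc, since the coordinate $v_k$ sits in position $k$ at layer $i$ and in position $k-1$ at layer $i+1$ and both use $\sigma_{i+k}$. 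This gives the displayed formula and the inclusion $\mathrm{S}_m\wr\mathbb{Z}_r\le\Aut(\Sigma)$. The base case $s=1$ is the remark preceding the lemma, since $\overrightarrow{C}_r[\overline{K_m}]$ has automorphism group $\mathrm{S}_m\wr\mathbb{Z}_r$ acting exactly as stated.

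For the reverse inclusion with $s\ge2$, I would argue as follows.
\begin{itemize}
\item $\Sigma$ is asymmetric because $r\ge3$.
\item By Lemma~\ref{lem:loosely-attached}, $\Sigma$ is loosely attached and its alternets are the complete bipartite digraphs $B_{w,i}\cup C_{w,i+1}$.
\item By Proposition~\ref{prop:alternets}(i), $\Aut(\Sigma)$ acts on $\mathsf{Al}(\Sigma)=\overrightarrow{\operatorname{PX}}(m,r,s-1)$ (Corollary~\ref{cor:alternet-graph}).
\item By Proposition~\ref{prop:alternets}(iii), $\mathsf{Pl}(\mathsf{Al}(\Sigma))=\Sigma$, so this action is faithful. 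An automorphism of $\Sigma$ is determined by how it permutes arcs, and it permutes arcs exactly as it permutes the alternets' arc structure; equivalently, a vertex of $\Sigma$ is the unique common vertex of an alternet pair forming an arc of $\mathsf{Al}(\Sigma)$.
\end{itemize}
Hence $|\Aut(\Sigma)|\le|\Aut(\overrightarrow{\operatorname{PX}}(m,r,s-1))|=|\mathrm{S}_m\wr\mathbb{Z}_r|$ by induction. Combined with the inclusion from the first step, this gives equality.

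The main obstacle is making the faithfulness and injectivity precise. I need that an automorphism of $\Sigma$ fixing every alternet is trivial. Each vertex of $\Sigma$ is $\mathrm{sink}(B_1)\cap\mathrm{source}(B_2)$ for a suitable arc $(B_1,B_2)$ of $\mathsf{Al}(\Sigma)$, and this intersection is a single vertex by loose attachment. So such an automorphism fixes every vertex. I would use this direct argument rather than rely on the exact form of Proposition~\ref{prop:alternets}(iii). The only place where care is needed is $m=1$, where everything is a directed cycle and the statement is immediate.
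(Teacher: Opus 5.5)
Your proposal is correct and follows essentially the paper's route. Both arguments use $\mathsf{Al}(\Sigma)=\overrightarrow{\operatorname{PX}}(m,r,s-1)$ recursively to reduce to the case $s=1$, and both realize $\mathrm{S}_m\wr\mathbb{Z}_r$ through the cycle-shift action. The one difference is that you obtain only the upper bound $|\Aut(\Sigma)|\le|\Aut(\mathsf{Al}(\Sigma))|$, from a faithful action that you verify directly, and then close by counting against the faithful cycle-shift subgroup; the paper instead uses Proposition~\ref{prop:alternets}(ii)--(iii) for the reverse embedding. Your counting step also makes the explicit ``Moreover'' formula rigorous, which the paper's chain of abstract isomorphisms leaves implicit.
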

\begin{proof}
  Note that $\Sigma=\overrightarrow{\operatorname{PX}}(\overrightarrow{B}(m,1),r,s)$.
  By Lemma~\ref{lem:loosely-attached},
  $\Sigma$ has complete bipartite alternets and is loosely attached.
  Moreover, $\Sigma$ is asymmetric and has no sinks or sources.
  By Proposition~\ref{prop:alternets} {\rm (iii)},
  $\mathsf{Pl}(\mathsf{Al}(\Sigma))=\Sigma$.
  Therefore, by Proposition~\ref{prop:alternets} {\rm (ii)},
  $\Aut(\mathsf{Al}(\Sigma))\le\Aut(\Sigma)$.
  On the other hand, by Proposition~\ref{prop:alternets} {\rm (i)},
  $\Aut(\Sigma)\le\Aut(\mathsf{Al}(\Sigma))$.
  Hence $\Aut(\Sigma)\cong\Aut(\mathsf{Al}(\Sigma))$.
  Furthermore, by Corollary~\ref{cor:alternet-graph},
  \[
    \mathsf{Al}(\Sigma)=\overrightarrow{\mathrm{PX}}(m,r,s-1),
  \]
  and thus
  \[
    \Aut(\Sigma)\cong\Aut(\overrightarrow{\mathrm{PX}}(m,r,s-1))
    \cong\cdots\cong \Aut(\overrightarrow{\mathrm{PX}}(m,r,1))
    \cong \mathrm{S}_m\wr\mathbb{Z}_r.
  \]
  Conversely, the action of $\mathrm{S}_m\wr\mathbb{Z}_r$ on $V\Sigma$ defined above is the cycle-shift action, hence well-defined.
  One can also check that $\mathrm{S}_m\wr\mathbb{Z}_r$ consists of
  digraph automorphisms. This completes the proof.
\end{proof}

This gives an alternative proof of the fact that the automorphism group of a directed Praeger--Xu graph $\overrightarrow{\mathrm{PX}}(m,r,s)$ is $\mathrm{S}_m\wr\mathbb{Z}_r$ (see \cite[Theorem 2.8]{Praeger1989high}).


\begin{theorem}\label{thm:aut-general-px}
	Let $\Gamma$ be a digraph such that $\Gamma\times\overrightarrow{K}_2$ is connected, and let $\Sigma=\overrightarrow{\mathrm{PX}}(\Gamma,r,s)$ where $s\ge 1$.
	Let $W=\mathrm{Sym}(V\Gamma)\wr\mathbb{Z}_r$.
  Then \[\Aut(\Sigma)=\{(g_1,\ldots,g_r,\emptycommand{k})\in W\mid \forall i, g_i\to g_{i+s}\}\le W,\] and $W$ acts on
  $V\Sigma$ by the cycle-shift action.
  In particular, $\Aut(\Gamma[r])=\Aut(\Gamma)\wr\mathbb{Z}_r$.
\end{theorem}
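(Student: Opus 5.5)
The plan is to first verify the inclusion $\supseteq$ by direct computation. Then I will prove $\subseteq$ by using the alternet machinery to embed $\Aut(\Sigma)$ into the automorphism group of a directed Praeger--Xu graph, where Lemma~\ref{lem:auto-of-De} applies.

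\textbf{Step 1 (the easy inclusion).} Let $g=(g_1,\ldots,g_r,k)\in W$ with $g_i\to g_{i+s}$ for all $i$. Consider an arc $(v_1,\ldots,v_s,i)\to(v_2,\ldots,v_s,y,i+1)$, where $(v_1,y)\in A\Gamma$. Under the cycle-shift action its tail goes to $(v_1g_{i+1},\ldots,v_sg_{i+s},i+k)$ and its head goes to $(v_2g_{i+2},\ldots,v_sg_{i+s},yg_{i+s+1},i+k+1)$. The shift condition on the middle coordinates holds automatically. The remaining requirement is that $(v_1g_{i+1},yg_{i+1+s})\in A\Gamma$ if and only if $(v_1,y)\in A\Gamma$, which is exactly $g_{i+1}\to g_{i+1+s}$. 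Hence $g\in\Aut(\Sigma)$. The action is faithful by Lemma~\ref{lem:cycle-shift} when $|V\Gamma|>1$, so this gives an injective map of the displayed set into $\Aut(\Sigma)$.

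\textbf{Step 2 (the hard inclusion, $s\ge2$).} By Lemma~\ref{lem:loosely-attached}, $\Sigma$ is loosely attached, with alternets $\mathcal A_{w,i}$. By Proposition~\ref{prop:alternets}(i) and Corollary~\ref{cor:alternet-graph}, every $\varphi\in\Aut(\Sigma)$ induces an automorphism of $\mathsf{Al}(\Sigma)=\overrightarrow{\mathrm{PX}}(m,r,s-1)$. By Lemma~\ref{lem:auto-of-De}, this induced automorphism is an element $(g_1,\ldots,g_r,k)\in W$ acting by cycle shift on $(V\Gamma)^{s-1}\times\mathbb Z_r$.

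A vertex $(x_1,\ldots,x_s,i)$ of $\Sigma$ is the unique common vertex of the sink set of $\mathcal A_{(x_1,\ldots,x_{s-1}),i-1}$ and the source set of $\mathcal A_{(x_2,\ldots,x_s),i}$. Here uniqueness comes from loose attachment and connectedness of $\Gamma\times\overrightarrow K_2$, which guarantees there are no isolated sides. Since $\varphi$ preserves these incidences, the image of this vertex is determined by the images of the two alternets. The first alternet maps to prefix $(x_1g_{i},\ldots,x_{s-1}g_{i+s-2})$ and the second to suffix $(x_2g_{i+1},\ldots,x_sg_{i+s-1})$, both with the appropriate layer shift.

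I expect the main obstacle to be the index bookkeeping here: the two descriptions must glue into a single element of $W$ acting by cycle shift on $\Sigma$. The cycle-shift action on alternets indexed by $(w,i)$ naturally carries an offset of one layer compared with the vertex indexing. I would reindex so that the alternet $\mathcal A_{w,i}$ corresponds to the middle coordinates $v_2,\ldots,v_s$ of its tails in layer $i$. Doing this consistently shows that $\varphi$ equals the cycle-shift action of some $(h_1,\ldots,h_r,k)\in W$ on $V\Sigma$. Finally, Step 1's computation, read in reverse, shows that $\varphi$ preserves arcs only if $h_i\to h_{i+s}$ for all $i$.

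\textbf{Step 3 (remaining cases).} For $s=1$, $\Sigma=\Gamma\times\overrightarrow C_r$. Since $r\ge3$, the layers $V\Gamma\times\{i\}$ are the blocks of the unique homomorphism onto $\overrightarrow C_r$, and connectivity of $\Gamma\times\overrightarrow K_2$ makes these layers recognizable, for instance via alternets, which are exactly the copies $\Gamma\times\overrightarrow K_2$ between consecutive layers. Thus any automorphism permutes the layers cyclically, acting by some $g_i$ on each layer, and the arc condition is precisely $g_i\to g_{i+1}$. For the special case $s=r$, the condition reads $g_i\to g_i$, meaning $g_i\in\Aut(\Gamma)$, which gives $\Aut(\Gamma[r])=\Aut(\Gamma)\wr\mathbb Z_r$.
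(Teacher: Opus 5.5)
Your proposal is correct and follows essentially the same route as the paper. You check $\supseteq$ directly, reduce $\Aut(\Sigma)$ to $\Aut(\mathsf{Al}(\Sigma))=\Aut(\overrightarrow{\mathrm{PX}}(m,r,s-1))$ for $s\ge 2$ via loose attachment and Lemma~\ref{lem:auto-of-De}, and recognise layers for $s=1$; your alternet source sets are exactly the components of the paper's common-out-neighbour graph $\hat\Sigma$. Your explicit gluing step, which recovers each vertex as the unique sink/source intersection of two alternets and absorbs the one-layer offset via $h_j=g_{j-1}$, usefully spells out what the paper compresses into ``Hence $S\le W$''.
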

\begin{proof}
    If $s=1$, then $\Sigma=\Gamma\times\overrightarrow{C}_r$.
    Let $B_i=\{(v,i)\in V\Sigma\mid v\in V\Gamma\}$ for $i\in \mathbb{Z}_r$.
    Define $\hat\Sigma$ be a graph whose vertices are the same to $\Sigma$ and two vertices are adjacent if and only if they share an out-neighbour in $\Sigma$.
    Therefore, two vertices are adjacent only if they lies in the same $B_i$.
    Since $\Gamma[B_i\cup B_{i+1}]=\Gamma\times\overrightarrow{K}_2$ is connected, we have $\hat\Sigma[B_i]$ is also connected.
    So $\hat\Sigma[B_i]$ forms a connected component of $\hat\Sigma$.
    As $\Aut(\Sigma)$ preserves the adjacency of $\hat\Sigma$, $\Aut(\Sigma)$ preserves connected components of $\hat\Sigma$, which means that $\Aut(\Sigma)$ preserves \[\{B_i\mid i\in\mathbb{Z}_r\}.\]
    Let $\sigma\in\Aut(\Sigma)$.
    Then there exist $g_i\in\Sym(B_i)$ and $\eta\in\Sym(\mathbb{Z}_r)$ such that for every $(v,j)\in V\Sigma$,
    \[(v,j)\sigma=(vg_j,j\eta)\tag{$\ast$}\label{eq:s=1_case_1}\]
    By the definition of $\Sigma$, one can check that 
    \[g_i\to g_{i+1}, \forall i\in\mathbb{Z}_r\quad \text{and }\quad \eta\in\Aut(C_r)\tag{$\ast\ast$}\label{eq:s=1_case_2}\]
    Conversely, one can also check that any permutation satisfying (\ref{eq:s=1_case_1}) and (\ref{eq:s=1_case_2}) is an automorphism of $\Sigma$.
    This completes the case where $s=1$.

    Now suppose $s\ge 2$. 
	Let $m=|V\Gamma|$, and put $S=\Aut(\Sigma)$.
	By Lemma~\ref{lem:loosely-attached}, $\Sigma$ is loosely attached.
	By Corollary~\ref{cor:alternet-graph},
	\[
		\mathsf{Al}(\Sigma)=\overrightarrow{\mathrm{PX}}(m,r,s-1).
	\]

	Let $W$ act on $V\Sigma$ by the cycle-shift action.
	This action preserves alternets, so it induces a faithful permutation group
	$\emptycommand{W}$ on $V\mathsf{Al}(\Sigma)$.
	Using Corollary~\ref{cor:alternet-graph} and Lemma~\ref{lem:auto-of-De}, we have
	\[
		\emptycommand{W}=\Aut(\mathsf{Al}(\Sigma)).
	\]

	Since $\Sigma$ is asymmetric and has no sinks or sources, by
	Proposition~\ref{prop:alternets} {\rm (ii)}, the induced action
	$\emptycommand{S}$ of $S$ on $\mathsf{Al}(\Sigma)$ satisfies $\emptycommand{S}\le\emptycommand{W}$.
	Hence $S\le W$.

	Now set
	$T:=\{(g_1,\ldots,g_r,\emptycommand{k})\in W\mid \forall i,\ g_i\to g_{i+s}\}.$
	We show that $S=T$.
	Let $g=(g_1,\ldots,g_r,\emptycommand{i})\in S\le W$.
	For any arc $(u,v)\in A\Gamma$ and any $i$, consider the arc of $\Sigma$
	\[
		(u,w,\emptycommand{i-1})\to(w,v,\emptycommand{i}),
	\]
	where $w\in (V\Gamma)^{s-1}$.
	Applying $g$ and using that $g$ preserves arcs of $\Sigma$, we obtain
	$(ug_{i},vg_{i+s})\in A\Gamma$.
	Thus $g_i\to g_{i+s}$ for all $i$, so $g\in T$.
	Hence $S\subseteq T$.
	Conversely, let $g=(g_1,\ldots,g_r,\emptycommand{k})\in T$.
	Take any arc
	\[
		(x_1,\ldots,x_s,\emptycommand{i})\to(y_1,\ldots,y_s,\emptycommand{i+1})
	\]
	of $\Sigma$.
	Then $(x_1,y_s)\in A\Gamma$ and $y_j=x_{j+1}$ for $1\le j\le s-1$.
	By $g_{i+1}\to g_{i+s+1}$,
	\[
		(x_1g_{i+1},y_sg_{i+s+1})\in A\Gamma,
	\]
	and clearly $y_jg_{j+i+1}=x_{j+1}g_{j+i+1}$ for $1\le j\le s-1$.
	So the images of the two endpoints under the cycle-shift action are again adjacent,
	and therefore $g$ preserves arcs.
	Hence $g\in S$, so $T\subseteq S$.
	Therefore $S=T$.
	
	If $r=s$, then $i\equiv i+s\pmod r$.
	Hence $g_i\to g_i$, which is equivalent to $g_i\in\Aut(\Gamma)$.
	Therefore $\Aut(\Gamma[r])=\Aut(\Gamma)\wr\mathbb{Z}_r$.
	This completes the proof.
\end{proof}

\begin{remark}\label{rmk:necessarity-connected-alternets}
    The connectivity of $\Gamma\times\overrightarrow{K}_2$ is crucial in Theorem~\ref{thm:aut-general-px}.
    For example, let $\Gamma=K_2$. 
    Then $\Gamma^{*3}=\overrightarrow{C}_6\cup K_2$ and $\Gamma[3]=(\overrightarrow{C}_6\cup K_2)\times\overrightarrow{C}_3=4\overrightarrow{C}_6$. 
    Thus, $\Aut(\Gamma)\wr \mathbb{Z}_3\cong\mathbb{Z}_2\wr\mathbb{Z}_3$ while $\Aut(\Gamma[3])\cong\mathbb{Z}_6\wr\S_4$.
\end{remark}

\subsection{Connectivity and transitivity}
With the results established in the previous sections, we now examine the connectivity and
transitivity of $\overrightarrow{\mathrm{PX}}(\Gamma,r,s)$.


\begin{theorem}\label{cor:nice-connectivity}
    Let $\Gamma$ be a digraph, and assume that $\Gamma\times\overrightarrow{K}_2$ is connected.
    Then $\overrightarrow{\operatorname{PX}}(\Gamma,r,s)$ is connected.
\end{theorem}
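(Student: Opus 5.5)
We prove connectivity of $\Sigma=\overrightarrow{\operatorname{PX}}(\Gamma,r,s)$ by induction on $s$, using the alternet structure of Lemma~\ref{lem:loosely-attached} and Corollary~\ref{cor:alternet-graph}. Throughout, connectivity means connectivity of the underlying graph.

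For $s=1$ we have $\Sigma=\Gamma\times\overrightarrow{C}_r$. For each $i$, the subdigraph induced on the layers $B_i\cup B_{i+1}$ is isomorphic to $\Gamma\times\overrightarrow{K}_2$, so it is connected by hypothesis. Consecutive pairs of layers share the layer $B_{i+1}$. Hence the whole of $\Sigma$ is connected.

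For $s\ge 2$, Lemma~\ref{lem:loosely-attached} says that every alternet $\mathcal{A}_{w,i}=\Sigma[B_{w,i}\cup C_{w,i+1}]$ is isomorphic to $\Gamma\times\overrightarrow{K}_2$, hence connected. Every vertex of $\Sigma$ lies in some alternet, because $\Gamma$ has no sinks or sources by Remark~\ref{rmk:connected-alternets}. Corollary~\ref{cor:alternet-graph} identifies $\mathsf{Al}(\Sigma)$ with $\overrightarrow{\mathrm{PX}}(m,r,s-1)$. An arc $\mathcal{A}_1\to\mathcal{A}_2$ in $\mathsf{Al}(\Sigma)$ means $\mathrm{sink}(\mathcal{A}_1)\cap\mathrm{source}(\mathcal{A}_2)\neq\emptyset$, so the two alternets share a vertex. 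Therefore, if the underlying graph of $\mathsf{Al}(\Sigma)$ is connected, $\Sigma$ is a union of connected subgraphs glued along a connected pattern, and so $\Sigma$ is connected.

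It remains to show that $\overrightarrow{\mathrm{PX}}(m,r,s-1)$ is connected. This is the case $\Gamma=K_m^{\circ}$, the complete digraph with loops, which is how $\overrightarrow{\mathrm{PX}}(m,r,s-1)=\overrightarrow{\operatorname{PX}}(K_m^\circ,r,s-1)$ arises. The digraph $K_m^\circ\times\overrightarrow{K}_2$ is the complete bipartite graph $\overrightarrow{K}_{m,m}$, which is connected. The induction hypothesis (on $s$, applied to this base digraph) then gives connectivity. The base case $s-1=1$ is $\overrightarrow{C}_r[\overline{K_m}]$, which is clearly connected.

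The main obstacle is the gluing step: checking carefully that adjacency in $\mathsf{Al}(\Sigma)$ really corresponds to two alternets sharing a vertex of $\Sigma$, and that alternets cover all of $V\Sigma$. This is why the absence of sinks and sources, guaranteed by the hypothesis via Remark~\ref{rmk:connected-alternets}, is needed. An alternative route would be to show directly that, starting from any vertex, the coordinates can be changed one at a time while moving within alternets. The inductive route seems cleaner, since the earlier results already supply the alternet description.
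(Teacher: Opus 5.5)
Your proof is correct and follows the paper's argument: alternets are connected copies of $\Gamma\times\overrightarrow{K}_2$ glued along $\mathsf{Al}(\Sigma)=\overrightarrow{\mathrm{PX}}(m,r,s-1)$, and one recurses down to $\overrightarrow{C}_r[\overline{K_m}]$. Two points that the paper's proof leaves implicit are made explicit in your version: the direct layer argument for $s=1$ with arbitrary $\Gamma$, and the check that $K_m^\circ\times\overrightarrow{K}_2$ is connected, which is what allows the recursion.
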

\begin{proof}
    Put $\Sigma=\overrightarrow{\mathrm{PX}}(\Gamma,r,s)$.
    We first claim that if $\mathsf{Al}(\Sigma)$ is connected, then $\Sigma$ is connected.
    Note that every alternet is connected.
    Then by the connectivity of $\mathsf{Al}(\Sigma)$, $\Sigma$ is connected.

    Note that $\overrightarrow{\mathrm{PX}}(m,r,1)=\overrightarrow{C}_r[\overline{\emptycommand{K}}_m]$.
    Hence $\overrightarrow{\mathrm{PX}}(m,r,1)$ is connected.
    Assume $s\ge 2$.
    By Corollary~\ref{cor:alternet-graph}, $\mathsf{Al}(\Sigma)=\overrightarrow{\mathrm{PX}}(m,r,s-1)$.
    It is sufficient to prove that $\overrightarrow{\mathrm{PX}}(m,r,s-1)$ is connected.
    Apply Corollary~\ref{cor:alternet-graph} recursively.
    We obtain that it is sufficient to prove that $\overrightarrow{\mathrm{PX}}(m,r,1)$ is connected.
    This completes the proof.
\end{proof}

\begin{defi}
	Let $\Gamma$ be a digraph, and let $\ell\ge 1$ and $t\ge 0$.
	Let \[\mathcal{P}_\ell(\Gamma)=\{(\sigma_1,\ldots,\sigma_\ell)\in\operatorname{Sym}(V\Gamma)^\ell\mid \forall i, \sigma_i\to \sigma_{i+1}\},\]
	the set of closed directed walks of length $\ell$ in graph $(\operatorname{Sym}(V\Gamma),\Aut^{\operatorname{TF}}(\Gamma))$, where indices of $\sigma$ are taken modulo $\ell$.
	One can check that $\mathcal{P}_\ell(\Gamma)$ is a group.
    
    Let \[W_t(\Gamma)=\{(v_0,\ldots,v_t)\in (V\Gamma)^{t+1}\mid \forall 1\le i\le t,v_{i-1}\to v_i\}\] be the set of directed walks of length $t$ in $\Gamma$. In particular, $W_0(\Gamma)=V\Gamma$.
    The group $\mathcal{P}_{\ell}(\Gamma)$ has an action on $W_t(\Gamma)$ given by
    \[(v_0,\ldots,v_t)(\sigma_1,\ldots,\sigma_\ell)=(v_0\sigma_1,v_1\sigma_2,\ldots,v_t\sigma_{t+1})\]
    where indices of $\sigma$ are taken modulo $\ell$.
    We call this action the \emph{closed walk action}.
\end{defi}

Now we are ready to prove the main result of this section, which characterizes the vertex-transitivity and arc-transitivity of $\overrightarrow{\mathrm{PX}}(\Gamma,r,s)$.
To clarify the main idea, we address the specific case where $s\mid r$ (Proposition~\ref{thm:ver-trans} and Proposition~\ref{thm:arc-transitivity}) first, and then extend the same approach to the general case (Theorem~\ref{thm:arc-transitivity-new}).

Note that if $\Gamma$ is asymmetric, $W_t(\Gamma)$ is the set of $t$-arcs of $\Gamma$ for $t\ge 1$.
However, if $\Gamma$ is a graph, $W_t(\Gamma)$ is the set of $t$-walks of $\Gamma$ for $t\ge 1.$

\begin{proposition}\label{thm:ver-trans}
    Let $\Gamma$ be a digraph such that $\Gamma\times\overrightarrow{K}_2$ is connected and let $r\ge 3$, $s\ge 1$ and $t\ge 1$ with $s\mid r$.
    Write $\ell=r/s$ and let $\Sigma=\overrightarrow{\operatorname{PX}}(\Gamma,r,s)$.
    Then $\Sigma$ is vertex-transitive if and only if the closed walk action of $\mathcal{P}_\ell(\Gamma)$ on $W_0(\Gamma)$ is transitive.
    In particular, if $\Gamma$ is vertex-transitive, then $\Sigma$ is vertex-transitive.
\end{proposition}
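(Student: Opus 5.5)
By Theorem~\ref{thm:aut-general-px}, $\Aut(\Sigma)$ is the group
\[
T=\{(g_1,\ldots,g_r,\emptycommand{k})\in \Sym(V\Gamma)\wr\mathbb{Z}_r\mid \forall i,\ g_i\to g_{i+s}\},
\]
acting by the cycle-shift action. The plan is to analyse this action directly.

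First I would decompose the conditions on $T$. Since $s\mid r$, the indices $\{1,\ldots,r\}$ split into $s$ residue classes modulo $s$, each a cycle of length $\ell=r/s$ under $i\mapsto i+s$. For each $c\in\{1,\ldots,s\}$, the tuple $(g_c,g_{c+s},\ldots,g_{c+(\ell-1)s})$ must be an element of $\mathcal{P}_\ell(\Gamma)$. Therefore the base group $T\cap\Sym(V\Gamma)^r$ is isomorphic to $\mathcal{P}_\ell(\Gamma)^s$, and the conditions for different classes are independent. The rotation $(1,\ldots,1,\emptycommand{1})$ lies in $T$, because $1\to 1$ is a two-fold automorphism. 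So $T$ is transitive on the layers $\mathbb{Z}_r$, and $\Sigma$ is vertex-transitive if and only if the base group $K=T\cap \Sym(V\Gamma)^r$ is transitive on the layer $(V\Gamma)^s\times\{\emptycommand{0}\}$.

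Next I would compute how $K$ acts on that layer. An element $g$ sends $(w_1,\ldots,w_s,\emptycommand{0})$ to $(w_1g_1,\ldots,w_sg_s,\emptycommand{0})$. The coordinates $1,\ldots,s$ lie in distinct residue classes, so the action of $K\cong\mathcal{P}_\ell(\Gamma)^s$ on the layer is the product action. In it, the $c$-th factor acts on the $c$-th coordinate through the first entry $\sigma_1$ of an element $(\sigma_1,\ldots,\sigma_\ell)\in\mathcal{P}_\ell(\Gamma)$. This is exactly the closed walk action of $\mathcal{P}_\ell(\Gamma)$ on $W_0(\Gamma)=V\Gamma$. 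A product action is transitive if and only if each factor is transitive, and all factors here are the same action. This gives the equivalence.

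For the final assertion, suppose $\Gamma$ is vertex-transitive. For $\sigma\in\Aut(\Gamma)$, the constant tuple $(\sigma,\ldots,\sigma)$ lies in $\mathcal{P}_\ell(\Gamma)$, since $\sigma\to\sigma$ holds. So the closed walk action contains the action of $\Aut(\Gamma)$ on $V\Gamma$, which is transitive.

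The step that needs the most care is the bookkeeping with indices modulo $r$ versus modulo $s$. One point to check is that the relevant cycle for coordinate $c$ is $c,c+s,\ldots$ taken in the right order, so that it matches the direction $\sigma_i\to\sigma_{i+1}$ in $\mathcal{P}_\ell$. Another is the edge case $\ell=1$ (that is, $r=s$), where $\mathcal{P}_1(\Gamma)=\{\sigma\mid\sigma\to\sigma\}=\Aut(\Gamma)$; this is consistent with Theorem~\ref{thm:aut-general-px}.
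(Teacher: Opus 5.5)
Your proposal is correct and follows the paper's proof essentially step by step: you use the rotations to reduce to the layer-$0$ stabilizer $\{(g_1,\ldots,g_r,0)\}$, identify it with $\mathcal{P}_\ell(\Gamma)^s$ by splitting the indices into the $s$ cycles of $i\mapsto i+s$, and note that the resulting product action on $(V\Gamma)^s$ is transitive if and only if the closed walk action on $W_0(\Gamma)$ is. Your final assertion, via the constant tuples $(\alpha,\ldots,\alpha)$ with $\alpha\in\Aut(\Gamma)$, is also exactly the paper's argument.
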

\begin{proof}
    Let $B_0=\{(x_1,\ldots,x_s,0)\in V\Sigma\mid x_i\in V\Gamma\}$.
    For every vertex $(y_1,\ldots,y_s,i)\in V\Sigma$, the automorphism $(\mathrm{id},\ldots,\mathrm{id},-i)$ sends it to a vertex in $B_0$.
    Moreover, by Theorem~\ref{thm:aut-general-px}, an automorphism taking one element of $B_0$ to another has the form $(g_1,\ldots,g_r,0)$.
    Therefore, $\Sigma$ is vertex-transitive if and only if $A_0$ acts on $B_0$ transitively.
    Let $\psi:B_0\to W_0(\Gamma)^s,(x_1,\ldots,x_s,0)\mapsto (x_1,\ldots,x_s)$. Then $\psi$ is bijective.

    Let $\mathcal{P}_\ell(\Gamma)^s$ act on $W_0(\Gamma)^s$ by the product action.
    For $g=(g_1,\ldots,g_r,0)\in A_0$, define
    \[
        \varphi(g)=\bigl((g_1,g_{1+s},\ldots,g_{1+(\ell-1)s}),\ldots,
        (g_s,g_{2s},\ldots,g_{s+(\ell-1)s})\bigr).
    \]
    By Theorem~\ref{thm:aut-general-px}, $\varphi:A_0\to\mathcal{P}_\ell(\Gamma)^s$ is a well-defined group isomorphism. Moreover,
    \[
        \psi(lg)=\psi(l)\varphi(g)
        \qquad\text{for all }l\in L\text{ and }g\in A_0.
    \]
    Hence $\Sigma$ is vertex-transitive if and only if $\mathcal{P}_\ell(\Gamma)^s$ is transitive on $W_0(\Gamma)^s$.
    The latter is equivalent to that $\mathcal{P}_\ell(\Gamma)$ is transitive on $W_0(\Gamma)$.

    Now suppose that $\Gamma$ is vertex-transitive.
    Let $P=\{(\alpha,\ldots,\alpha)\in\Sym(V\Gamma)^\ell\mid \alpha\in\Aut(\Gamma)\}$.
    Then $P\le\mathcal{P}_\ell(\Gamma)$ and $P$ is transitive on $W_0(\Gamma)$, which implies that $\mathcal{P}_\ell(\Gamma)$ is transitive on $W_0(\Gamma)$. 
    This completes the proof.
\end{proof}

\begin{proposition}
    \label{thm:arc-transitivity}
    Let $\Gamma$ be a digraph such that $\Gamma\times\overrightarrow{K}_2$ is connected and let $r\ge 3$, $s\ge 1$ and $t\ge 1$ with $s\mid r$. Let $\ell=r/s$, $q=\lceil t/s\rceil$ and $\Sigma=\overrightarrow{\operatorname{PX}}(\Gamma,r,s)$.
    Then $\Sigma$ is $t$-arc-transitive if and only if the closed walk action of $\mathcal{P}_\ell(\Gamma)$ on $W_{q}(\Gamma)$ is transitive.
\end{proposition}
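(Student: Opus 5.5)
The plan is to reduce $t$-arc-transitivity of $\Sigma$ to a transitivity statement about walks in $\Gamma$, in the same way that Proposition~\ref{thm:ver-trans} reduces vertex-transitivity. The main tools are Theorem~\ref{thm:aut-general-px} and the isomorphism $\varphi$ built in that proof.

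First I would encode the $t$-arcs of $\Sigma$. Since $r\ge 3$, $\Sigma$ is asymmetric, so a $t$-arc is simply a directed walk of length $t$. By the definition of $\Gamma^{*s}$, a $t$-arc starting in the layer $B_0$ has the form
\[
(x_1,\ldots,x_s,0)\to(x_2,\ldots,x_{s+1},1)\to\cdots\to(x_{t+1},\ldots,x_{t+s},t),
\]
and it is determined by a word $(x_1,\ldots,x_{s+t})\in (V\Gamma)^{s+t}$ satisfying $x_j\to x_{j+s}$ in $\Gamma$ for $1\le j\le t$. Conversely, every such word gives a $t$-arc. Any $t$-arc can be moved to one starting in $B_0$ by the automorphism $(\mathrm{id},\ldots,\mathrm{id},-i)$. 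Every automorphism mapping one $t$-arc starting in $B_0$ to another lies in $A_0=\{(g_1,\ldots,g_r,0)\}\cap\Aut(\Sigma)$. Hence $\Sigma$ is $t$-arc-transitive if and only if $A_0$ is transitive on these words. By the cycle-shift action, $g\in A_0$ sends the letter $x_j$ to $x_jg_j$, with the index $j$ read modulo $r$; this remains valid when $t\ge r$ and the arc wraps around the layers.

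Next I would split each word into residue classes modulo $s$. For $c\in\{1,\ldots,s\}$, the subsequence $(x_c,x_{c+s},x_{c+2s},\ldots)$ is a directed walk in $\Gamma$ of length $q_c=\lfloor (s+t-c)/s\rfloor$. Conversely, any choice of walks of these lengths reassembles into a valid word. One checks that $q_1=\lfloor(s+t-1)/s\rfloor=\lceil t/s\rceil=q$, and that every $q_c\in\{q-1,q\}$.

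Under $\varphi$, the $c$-th component of $\varphi(g)$ is $(g_c,g_{c+s},\ldots)\in\mathcal P_\ell(\Gamma)$. It acts on the letter $x_{c+ks}$ by $g_{c+ks}$, which is exactly the $(k+1)$-st entry, taken modulo $\ell$ since $s\ell=r$. So the action of $A_0$ on $t$-arcs is equivalent to the product of closed walk actions of $\mathcal P_\ell(\Gamma)^s$ on $\prod_{c}W_{q_c}(\Gamma)$. A product action of a direct product is transitive if and only if each factor is transitive. Therefore $\Sigma$ is $t$-arc-transitive if and only if $\mathcal P_\ell(\Gamma)$ is transitive on $W_{q_c}(\Gamma)$ for every $c$.

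Finally, since $q_1=q$, it remains to show that transitivity on $W_q(\Gamma)$ implies transitivity on $W_{q-1}(\Gamma)$. The truncation map $W_q(\Gamma)\to W_{q-1}(\Gamma)$ that drops the last vertex commutes with the closed walk action. It is surjective because $\Gamma$ has no sinks, by Remark~\ref{rmk:connected-alternets}, so every walk of length $q-1$ extends to one of length $q$. An equivariant surjection carries transitivity from $W_q(\Gamma)$ down to $W_{q-1}(\Gamma)$.

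The step I expect to need the most care is the index bookkeeping in the equivariance claim, especially when $t\ge r$ or $q>\ell$. In that case entries of $\sigma$ repeat modulo $\ell$, and one must check that the coordinate $g_{c+ks}$, taken modulo $r$, really matches $\sigma_{k+1}$, taken modulo $\ell$. This follows from $s\ell=r$, but I would write it out explicitly.
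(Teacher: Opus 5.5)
Your proposal is correct and follows essentially the same route as the paper. You reduce to $A_0$ acting on $t$-arcs starting in $B_0$, encode each such arc by the word $x_1,\ldots,x_{s+t}$, and split the word along residue classes modulo $s$ into walks of lengths $q$ or $q-1$, the first of length $q$. You then identify $A_0$ with $\mathcal{P}_\ell(\Gamma)^s$ via $\varphi$ and handle the length-$(q-1)$ factors with the equivariant truncation, which is surjective because $\Gamma$ has no sinks. Your explicit check that $g_{c+ks}$ (index modulo $r$) is the $(k+1)$-st entry of the $c$-th component (index modulo $\ell$), because $s\ell=r$, is a detail the paper leaves implicit.
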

\begin{proof}
    As $\Sigma$ is asymmetric, $W_t(\Sigma)$ is the set of $t$-arcs of $\Sigma$.
    Let $B_0=\{(x_1,\ldots,x_s,0)\in V\Sigma\mid x_i\in V\Gamma\}$ and let
    \[
        L=\{(v_0,\ldots,v_t)\in W_t(\Sigma)\mid v_0\in B_0\}
    \]
    be the set of $t$-arcs whose starting vertex lies in $B_0$.
    The automorphism $({\rm id},\ldots,{\rm id},-i)$ sends every vertex in layer $i$ to $B_0$.
    Moreover, by Theorem~\ref{thm:aut-general-px}, an automorphism taking one element of $L$ to another has the form $(g_1,\ldots,g_r,0)$.
    Let
    \[
        A_0=\{(g_1,\ldots,g_r,0)\in\Aut(\Sigma)\}.
    \]
    Therefore, $\Sigma$ is $t$-arc-transitive if and only if $A_0$ acts on $L$ transitively.

    Let $l=(v_0,\ldots,v_t)\in L$.
    By the definition of $\Sigma$, there exist unique $s+t$ vertices $x_1,\ldots,x_{s+t}$ of $\Gamma$ such that $x_i\to x_{i+s}$ for $1\le i\le t$ and
    \[\begin{aligned}
        v_0&=(x_1,\ldots,x_s,0),\\
        v_1&=(x_2,\ldots,x_s,x_{s+1},1),\\
        &\ \vdots\\
        v_t&=(x_{t+1},\ldots,x_{t+s},t).
    \end{aligned}\]
    We represent $l$ by $\underline{x_1,\ldots,x_{s+t}}$. Then the action of $A_0$ on $L$ is given by
    \[
        \underline{x_1,\ldots,x_{s+t}}(g_1,\ldots,g_r,0)
        =\underline{x_1g_1,\ldots,x_{s+t}g_{s+t}},
    \]
    where indices of $g$ are taken modulo $r$.

    Let $e=t-(q-1)s$. For $1\le i\le s$, let
    \[
        k_i=\left\lfloor\frac{s+t-i}{s}\right\rfloor
        =\begin{cases}q,&i\le e,\\q-1,&i>e,\end{cases}
        \qquad\text{and}\qquad
        w_i=(x_i,x_{i+s},\ldots,x_{i+k_is}).
    \]
    Then $w_i\in W_{k_i}(\Gamma)$. Thus
    \[
        \psi:L\longrightarrow W_q(\Gamma)^e\times W_{q-1}(\Gamma)^{s-e},
        \qquad
        \underline{x_1,\ldots,x_{s+t}}\longmapsto(w_1,\ldots,w_s)
    \]
    is a bijection.

    Let $\mathcal{P}_\ell(\Gamma)^s$ act on $W_q(\Gamma)^e\times W_{q-1}(\Gamma)^{s-e}$ by the product action, where each factor acts by the closed walk action. For $g=(g_1,\ldots,g_r,0)\in A_0$, define
    \[
        \varphi(g)=\bigl((g_1,g_{1+s},\ldots,g_{1+(\ell-1)s}),\ldots,
        (g_s,g_{2s},\ldots,g_{s+(\ell-1)s})\bigr).
    \]
    By Theorem~\ref{thm:aut-general-px}, $\varphi:A_0\to\mathcal{P}_\ell(\Gamma)^s$ is a well-defined group isomorphism. Moreover,
    \[
        \psi(lg)=\psi(l)\varphi(g)
        \qquad\text{for all }l\in L\text{ and }g\in A_0.
    \]
    Hence $\Sigma$ is $t$-arc-transitive if and only if $\mathcal{P}_\ell(\Gamma)^s$ is transitive on $W_q(\Gamma)^e\times W_{q-1}(\Gamma)^{s-e}$.

    If the closed walk action of $\mathcal{P}_\ell(\Gamma)$ on $W_q(\Gamma)$ is transitive, then its action on $W_{q-1}(\Gamma)$ is also transitive because $\Gamma$ has no sources or sinks (Remark~\ref{rmk:connected-alternets}). Hence the product action is transitive. Conversely, since $e\ge1$, transitivity of the product action implies transitivity on its first $W_q(\Gamma)$ factor. This proves the result.
\end{proof}

\begin{corollary}
    \label{thm:arc-transitivity-apply}
    Let $\Gamma$ be a digraph such that $\Gamma\times\overrightarrow{K}_2$ is connected, and let $s\ge 1$ and $r\ge 3$ be integers such that $s\mid r$. Let $\Sigma=\overrightarrow{\operatorname{PX}}(\Gamma,r,s)$.
    \begin{itemize}
        \item [\rm (i)] If $\Gamma$ is arc-transitive, then $\Sigma$ is $s$-arc-transitive. Moreover, $\Gamma[r]$ is $r$-arc-transitive if and only if $\Gamma$ is arc-transitive.
        \item [\rm (ii)] If $\Gamma$ is asymmetric and $k$-arc-transitive, then $\Sigma$ is $ks$-arc-transitive. 
        \item [\rm (iii)] If $\Gamma$ is asymmetric, then $\Gamma[r]$ is $t$-arc-transitive if and only if $\Gamma$ is $\lceil t/r\rceil$-arc-transitive.
    \end{itemize}
\end{corollary}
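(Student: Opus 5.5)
All three parts follow from Proposition~\ref{thm:arc-transitivity}, which says that $\Sigma$ is $t$-arc-transitive if and only if the closed walk action of $\mathcal{P}_\ell(\Gamma)$ on $W_{\lceil t/s\rceil}(\Gamma)$ is transitive. In each part we choose $t$ and check transitivity on the relevant walk set. The basic tool is the diagonal subgroup
\[
P=\{(\alpha,\ldots,\alpha)\mid \alpha\in\Aut(\Gamma)\}\le\mathcal{P}_\ell(\Gamma).
\]
Under the closed walk action, $P$ acts on $W_k(\Gamma)$ exactly as $\Aut(\Gamma)$ acts on directed walks of length $k$.

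For (i), take $t=s$, so $q=1$ and $W_1(\Gamma)=A\Gamma$. If $\Gamma$ is arc-transitive, then $P$ is already transitive on $W_1(\Gamma)$, so $\mathcal{P}_\ell(\Gamma)$ is too, and $\Sigma$ is $s$-arc-transitive. For the statement about $\Gamma[r]$, set $s=r$, so $\ell=1$. Then $\mathcal{P}_1(\Gamma)=\{\sigma\mid \sigma\to\sigma\}=\Aut(\Gamma)$, and the closed walk action is the ordinary action on walks. With $t=r$ we get $q=1$, so $\Gamma[r]$ is $r$-arc-transitive if and only if $\Aut(\Gamma)$ is transitive on $A\Gamma$, i.e.\ if and only if $\Gamma$ is arc-transitive.

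For (ii), take $t=ks$, so $q=k$. Since $\Gamma$ is asymmetric, $W_k(\Gamma)$ is the set of $k$-arcs. Because $\Gamma$ is $k$-arc-transitive, $P$ is transitive on $W_k(\Gamma)$, and hence so is $\mathcal{P}_\ell(\Gamma)$.

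For (iii), again set $s=r$, so $\ell=1$ and $\mathcal{P}_1(\Gamma)=\Aut(\Gamma)$. Proposition~\ref{thm:arc-transitivity} then says $\Gamma[r]$ is $t$-arc-transitive if and only if $\Aut(\Gamma)$ is transitive on $W_{\lceil t/r\rceil}(\Gamma)$. By asymmetry this set is the set of $\lceil t/r\rceil$-arcs, so the condition is exactly $\lceil t/r\rceil$-arc-transitivity of $\Gamma$.

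The only step needing care is the identification of $W_q(\Gamma)$ with $q$-arcs and of $\mathcal{P}_1(\Gamma)$ with $\Aut(\Gamma)$. The first holds by asymmetry together with the note preceding Proposition~\ref{thm:ver-trans}. The second holds because $\sigma\to\sigma$ is exactly the definition of an automorphism. There is no further obstacle; the hypotheses $r\ge3$ and that $\Gamma\times\overrightarrow{K}_2$ is connected are inherited directly from the proposition.
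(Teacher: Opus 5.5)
Your proof is correct and follows the paper's argument. The diagonal subgroup of $\mathcal{P}_\ell(\Gamma)$ gives the sufficiency directions via Proposition~\ref{thm:arc-transitivity}, and the identification $\mathcal{P}_1(\Gamma)=\Aut(\Gamma)$ when $r=s$ gives the converses. The only small difference is in (iii): you read both directions directly off the proposition with $\ell=1$, whereas the paper gets sufficiency from (ii), which tacitly uses that $\lceil t/r\rceil r$-arc-transitivity implies $t$-arc-transitivity. Your version avoids that extra step and is slightly cleaner.
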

\begin{proof}
    Write $\ell=r/s$.
    
    {\rm (i)} Let $\Gamma$ be arc-transitive and let \[A_\ell=\{\underbrace{(\alpha,\ldots,\alpha)}_\ell\mid\alpha\in\Aut(\Gamma)\}.\]
    Then $A_\ell\le \mathcal{P}_\ell(\Gamma)$.
    Note that $A_\ell$ acts transitively on $W_1(\Gamma)$ in the closed walk action as $\Gamma$ is arc-transitive.
    Then by Theorem~\ref{thm:arc-transitivity}, $\Sigma$ is $s$-arc-transitive.
    
    Now suppose that $r=s$ and $\Sigma$ is $s$-arc-transitive. Then $\ell=1$.
    By Theorem~\ref{thm:arc-transitivity}, $\Aut(\Gamma)=\mathcal{P}_\ell(\Gamma)$ acts transitively on $W_1(\Gamma)$, which is the set of arcs of $\Gamma$.
    Hence $\Gamma$ is arc-transitive.

    {\rm (ii)} If $\Gamma$ is asymmetric, then $W_t(\Gamma)$ is the set of $t$-arc of $\Gamma$ for every $t\ge 1$.
    Let \[A_\ell=\{\underbrace{(\alpha,\ldots,\alpha)}_\ell\mid\alpha\in\Aut(\Gamma)\}.\]
    Then $A_\ell\le \mathcal{P}_\ell(\Gamma)$ and $A_\ell$ acts transitively on $W_k(\Gamma)$ in the closed walk action as $\Gamma$ is $k$-arc-transitive. 
    By Theorem~\ref{thm:arc-transitivity}, $\Sigma$ is $ks$-arc-transitive.

    {\rm (iii)} By {\rm (ii)}, we only need to prove the necessity.
    Let $r=s$ and $\Gamma[r]$ be $t$-arc-transitive.
    Then $\ell=1$.
    By Theorem~\ref{thm:arc-transitivity}, $\Aut(\Gamma)=\mathcal{P}_\ell(\Gamma)$ acts transitively on $W_{\lceil t/r\rceil}(\Gamma)$, which is the set of $\lceil t/r\rceil$-arcs of $\Gamma$. Hence $\Gamma$ is $\lceil t/r\rceil$-arc-transitive.
\end{proof}

To remove the restriction $s\mid r$, we need to consider several closed walk actions of the same group at the same time.

\begin{defi}\label{def:twisted-closed-walk}
    Let $\Gamma$ be a digraph, let $r,s\ge 1$, $d=\gcd(r,s)$, $\ell=r/d$ and $m=s/d$.
    Let $\sigma=(\sigma_1,\ldots,\sigma_\ell)\in \mathcal{P}_\ell(\Gamma)$.
    Define \[\sigma^{(n)}=(\sigma_{1+n},\ldots,\sigma_{\ell+n}).\]
    Then $\sigma^{(n)}\in\mathcal{P}_\ell(\Gamma)$.
    Choose an integer $c$ such that $cs\equiv d\pmod r$. Such an integer exists because $\gcd(m,\ell)=1$, and it is unique modulo $\ell$.
    For non-negative integers $k_0,\ldots,k_{m-1}$, define an action of $\mathcal{P}_\ell(\Gamma)$ on
    \[\prod_{j=0}^{m-1}W_{k_j}(\Gamma)\]
    by
    \[
        (w_0,\ldots,w_{m-1})\sigma
        =\bigl(w_0\sigma,w_1\sigma^{(c)},\ldots,w_{m-1}\sigma^{((m-1)c)}\bigr),
    \]
    and each factor is acted on by the closed walk action. All indices of $\sigma$ are taken modulo $\ell$.
    We call this the \emph{$+d$-twisted closed walk action} for $(r,s)$.
\end{defi}

One can check that this action is well-defined.
More explicitly, if $w_j=(v_{j,0},\ldots,v_{j,k_j})$, then its image in the $j$-th factor is
\[
    (v_{j,0}\sigma_{1+jc},v_{j,1}\sigma_{2+jc},\ldots,v_{j,k_j}\sigma_{k_j+1+jc}).
\]

For convenience, in the following theorem, a $0$-arc-transitive digraph means that this digraph is vertex-transitive.

\begin{theorem}\label{thm:arc-transitivity-new}
    Let $\Gamma$ be a digraph such that $\Gamma\times\overrightarrow{K}_2$ is connected, and let $r\ge 3$, $s\ge 1$ and $t\ge 0$.
    Let $d=\gcd(r,s)$, $\ell=r/d$, $m=s/d$ and $\Sigma=\overrightarrow{\operatorname{PX}}(\Gamma,r,s)$, and let
    \[
        q=\max\{1,\lceil t/s\rceil\},\qquad
        e=t-(q-1)s\quad\text{ and }\quad h=\lceil e/d\rceil.
    \]
    Then $0\le e\le s$ and $0\le h\le m$, and $\Sigma$ is $t$-arc-transitive if and only if $\mathcal{P}_\ell(\Gamma)$ is transitive on
    \[
        W_q(\Gamma)^h\times W_{q-1}(\Gamma)^{m-h}
    \]
    in the $+d$-twisted closed walk action for $(r,s)$.
    In particular, when $t=0$, we have $q=1$ and $e=h=0$, so $\Sigma$ is vertex-transitive if and only if $\mathcal{P}_\ell(\Gamma)$ is transitive on $W_0(\Gamma)^m=(V\Gamma)^m$ in this action.
\end{theorem}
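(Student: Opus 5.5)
The argument follows the proof of Proposition~\ref{thm:arc-transitivity}; the new work is in regrouping the coordinates when $s\nmid r$. Since $\Sigma$ is asymmetric, $t$-arcs are $t$-walks. The cycle-shift elements $(\mathrm{id},\ldots,\mathrm{id},-i)$ lie in $\Aut(\Sigma)$ by Theorem~\ref{thm:aut-general-px} and move layer $i$ to $B_0$. Hence $\Sigma$ is $t$-arc-transitive if and only if $A_0=\{(g_1,\ldots,g_r,0)\in\Aut(\Sigma)\}$ is transitive on the set $L$ of $t$-arcs starting in $B_0$. Exactly as before, a $t$-arc in $L$ is encoded by a word $\underline{x_1,\ldots,x_{s+t}}$ with $x_i\to x_{i+s}$ for $1\le i\le t$, and $g\in A_0$ acts by $x_i\mapsto x_ig_i$, with indices of $g$ taken modulo $r$. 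For $t=0$ the word is just $(x_1,\ldots,x_s)$ with no constraints.

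\textbf{Describing $A_0$.} By Theorem~\ref{thm:aut-general-px}, $A_0$ consists of the tuples $(g_1,\ldots,g_r)$ with $g_i\to g_{i+s}$ for all $i$. The map $i\mapsto i+s$ on $\mathbb{Z}_r$ has $d$ orbits, the residue classes modulo $d$, each a cycle of length $\ell$. For the class of $a\in\{1,\ldots,d\}$, set $\sigma^{[a]}=(g_a,g_{a+s},\ldots,g_{a+(\ell-1)s})$. Then $\sigma^{[a]}\in\mathcal{P}_\ell(\Gamma)$, and $g\mapsto(\sigma^{[1]},\ldots,\sigma^{[d]})$ is an isomorphism $A_0\cong\mathcal{P}_\ell(\Gamma)^d$.

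\textbf{Decomposing the word.} Split the word along the $s$ chains $x_i,x_{i+s},x_{i+2s},\ldots$ for $1\le i\le s$. The chain starting at $x_i$ is a walk in $\Gamma$ of length $q$ if $i\le e$ and of length $q-1$ otherwise, and distinct chains are unconstrained relative to each other. This gives a bijection $L\to\prod_{i=1}^sW_{k_i}(\Gamma)$. Group the $s$ chains by $a\equiv i\pmod d$; each class contains $m$ chains, with starting indices $a,a+d,\ldots,a+(m-1)d$. Since $cs\equiv d\pmod r$, index $a+jd$ equals $a+jcs$ modulo $r$. Therefore $g_{a+jd+ns}$ is the $(1+jc+n)$-th entry of $\sigma^{[a]}$, so $\sigma^{[a]}$ acts on the $j$-th chain of class $a$ via $\sigma^{[a]\,(jc)}$. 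This is exactly the $+d$-twisted closed walk action on $W_{k_{a}}\times W_{k_{a+d}}\times\cdots\times W_{k_{a+(m-1)d}}$.

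\textbf{Counting lengths and concluding.} In class $a$, the number of chains of length $q$ is $\#\{j: a+jd\le e\}$. For $a=1$ this equals $h=\lceil e/d\rceil$, which is the maximum over all classes. We obtain $A_0$-equivariantly
\[
L\cong\prod_{a=1}^{d}\Bigl(W_q^{h_a}\times W_{q-1}^{m-h_a}\Bigr),\qquad h_a=\#\{j:\ a+jd\le e\},
\]
with $\mathcal{P}_\ell(\Gamma)^d$ acting factorwise. So $\Sigma$ is $t$-arc-transitive if and only if each factor is transitive under the twisted action. The delicate point is that the long chains in each class sit at the initial positions $j<h_a$, so class $a$ is a coordinate truncation of class $1$. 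Because $\Gamma$ has no sources or sinks (Remark~\ref{rmk:connected-alternets}), truncating walks $W_q\to W_{q-1}$ is surjective and equivariant. Hence transitivity on the class-$1$ factor $W_q^h\times W_{q-1}^{m-h}$ implies transitivity on every other factor, and it is necessary as a factor. The case $t=0$ gives $q=1$, $e=h=0$, and all chains have length $0$.

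The main obstacle is the index bookkeeping in the grouping step: checking that the twist is exactly $jc$, and that within each class the long chains come first, so that the other classes are truncations of class $1$.
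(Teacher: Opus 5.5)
Your proposal is correct and follows the paper's proof essentially step for step. It has the same reduction to $A_0$ acting on $L$, the same encoding $\underline{x_1,\ldots,x_{s+t}}$, the same isomorphism $A_0\cong\mathcal{P}_\ell(\Gamma)^d$ via the $d$ cycles of $i\mapsto i+s$, the same identification $g_{a+jd+ns}=\sigma^{[a]}_{1+jc+n}$ yielding the $+d$-twisted action, and the same equivariant surjective truncation from the class-$1$ factor to the class-$a$ factor (using that $\Gamma$ has no sinks). The one point you leave implicit, which the paper states outright, is needed for the ``only if'' direction: an automorphism carrying one element of $L$ to another must have trivial $\mathbb{Z}_r$-component by Theorem~\ref{thm:aut-general-px}, and so lies in $A_0$.
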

\begin{proof}
    We regard a $0$-arc as a single vertex. Since $\Sigma$ is asymmetric, $W_t(\Sigma)$ is the set of $t$-arcs of $\Sigma$ for every $t\ge 0$.
    Let $B_0=\{(x_1,\ldots,x_s,0)\in V\Sigma\mid x_i\in V\Gamma\}$
    and let
    \[L=\{(v_0,\ldots,v_t)\in W_t(\Sigma)\mid v_0\in B_0\}\]
    be the set of $t$-arcs whose starting vertex lies in $B_0$.
    The automorphism $({\rm id},\ldots,{\rm id},-i)$ sends every vertex in layer $i$ to $B_0$.
    Moreover, by Theorem~\ref{thm:aut-general-px}, an automorphism sending one element of $L$ to another has the form $(g_1,\ldots,g_r,0)$.
    Let
    \[A_0=\{(g_1,\ldots,g_r,0)\in\Aut(\Sigma)\}.
    \]
    Therefore, $\Sigma$ is $t$-arc-transitive if and only if $A_0$ acts on $L$ transitively.

    Let $l=(v_0,\ldots,v_t)\in L$.
    By the definition of $\Sigma$, there exist unique $s+t$ vertices $x_1,\ldots,x_{s+t}$ of $\Gamma$ such that
    \[
        x_i\to x_{i+s}\quad(1\le i\le t),\qquad
        v_j=(x_{j+1},\ldots,x_{j+s},j)\quad(0\le j\le t),
    \]
    where the layer coordinate is read modulo $r$. For $t=0$, the arc conditions are vacuous, and this simply represents a vertex of $B_0$ by its $s$ entries.
    Therefore, we may represent $l$ as $\underline{x_1,\ldots,x_{s+t}}$, and the action of $A_0$ on $L$ is represented by
    \[
        \underline{x_1,\ldots,x_{s+t}}(g_1,\ldots,g_r,0)
        =\underline{x_1g_1,\ldots,x_{s+t}g_{s+t}},
    \]
    where indices of $g$ are taken modulo $r$.

    For $1\le i\le s$, let
    \[
        k_i=\left\lfloor\frac{s+t-i}{s}\right\rfloor
        =\begin{cases}q,&i\le e,\\q-1,&i>e,\end{cases}
    \]
    and
    \[
    w_i=(x_i,x_{i+s},\ldots,x_{i+k_is}).
    \]
    Then $w_i$ is a directed walk in $\Gamma$ of length $k_i$.
    Now we rewrite $\underline{x_1,\ldots,x_{s+t}}$ by these walks, grouping together those whose starting indices are congruent modulo $d$.
    This gives a bijection
    \[
        \psi:L\longrightarrow\prod_{a=1}^d X_a,\qquad
        \psi(l)=\bigl((w_a,w_{a+d},\ldots,w_{a+(m-1)d})\bigr)_{a=1}^d,
    \]
    where
    \[X_a=\prod_{j=0}^{m-1}W_{k_{a+jd}}(\Gamma).\]
    Indeed, the walks $w_i$ partition the $s+t$ entries $x_1,\ldots,x_{s+t}$, and the only restrictions on these entries are the arcs within each walk.

    For $g=(g_1,\ldots,g_r,0)\in A_0$, define
    \[
        \rho^{(a)}=(g_a,g_{a+s},\ldots,g_{a+(\ell-1)s})
        \qquad(1\le a\le d).
    \]
    Addition by $s$ on $\mathbb{Z}_r$ has $d$ cycles of length $\ell$, represented by $1,\ldots,d$. By Theorem~\ref{thm:aut-general-px}, the conditions $g_i\to g_{i+s}$ say exactly that each $\rho^{(a)}$ belongs to $\mathcal{P}_\ell(\Gamma)$. Thus
    \[
        \varphi:A_0\longrightarrow\mathcal{P}_\ell(\Gamma)^d,\qquad
        g\longmapsto(\rho^{(a)})_{a=1}^d
    \]
    is a group isomorphism: it rearranges the $r$ coordinates, and its inverse assigns the entries of the $d$ closed walks to their respective cycles.

    Choose $c$ as in Definition~\ref{def:twisted-closed-walk}, so that $cs\equiv d\pmod r$.
    For $0\le j\le m-1$ and $0\le u\le k_{a+jd}$, we have
    \[
        g_{a+jd+us}=\rho^{(a)}_{1+jc+u}.
    \]
    Consequently, the entries of $w_{a+jd}$ are acted on by the closed walk action of $(\rho^{(a)})^{(jc)}$, so $\rho^{(a)}$ acts on $X_a$ by the $+d$-twisted closed walk action.
    Let $\mathcal{P}_\ell(\Gamma)^d$ act on $\prod_{a=1}^d X_a$ by these actions in the respective factors. Then
    \[
        \psi(lg)=\psi(l)\varphi(g)
        \qquad\text{for all }l\in L\text{ and }g\in A_0.
    \]
    Therefore, $\Sigma$ is $t$-arc-transitive if and only if $\mathcal{P}_\ell(\Gamma)$ is transitive on every $X_a$ in the twisted action.

    For $1\le a\le d$, let
    \[
        h_a=\bigl|\{j\in\{0,\ldots,m-1\}:a+jd\le e\}\bigr|.
    \]
    The factors of $X_a$ with walk length $q$ are precisely its first $h_a$ factors. Hence
    \[
        X_a=W_q(\Gamma)^{h_a}\times W_{q-1}(\Gamma)^{m-h_a},
        \qquad h_1=\lceil e/d\rceil=h,\qquad 0\le h_a\le h.
    \]
    In particular, $X_1$ is the set in the statement. There is a map $X_1\to X_a$ which deletes the final vertex in each factor indexed by $h_a\le j<h$ and leaves the other factors unchanged. 
    The map is commutative with the action of $\mathcal{P}_\ell(\Gamma)$ because truncation commutes with the closed walk action. It is surjective because $q\ge1$ and every directed walk of length $q-1$ can be extended to a directed walk of length $q$, as $\Gamma$ has no sinks (Remark~\ref{rmk:connected-alternets}). Thus transitivity on $X_1$ implies transitivity on every $X_a$. 
    The converse follows by taking $a=1$.

    When $t=0$, all $k_i$ and $h_a$ are zero, so no truncation is needed and every $X_a$ is $W_0(\Gamma)^m$. Explicitly, the twisted action on this set is
    \[
        (y_0,\ldots,y_{m-1})(\sigma_1,\ldots,\sigma_\ell)
        =(y_0\sigma_1,y_1\sigma_{1+c},\ldots,y_{m-1}\sigma_{1+(m-1)c}).
    \]
    The same action isomorphism therefore gives exactly the claimed vertex-transitivity criterion.
\end{proof}

\begin{remark}
    If $s\mid r$, then $d=s$ and $m=1$. The twisted action has only one factor and is the closed walk action. For $t=0$, we have $h=0$ and the action is on $W_0(\Gamma)$, recovering Proposition~\ref{thm:ver-trans}. For $t\ge1$, we have $h=1$ and the action is on $W_{\lceil t/s\rceil}(\Gamma)$, recovering Proposition~\ref{thm:arc-transitivity}.
\end{remark}

Theorem~\ref{thm:arc-transitivity-new} also gives a restriction on the possible degrees of arc-transitivity without assuming $s\mid r$.

\begin{corollary}\label{cor:arc-transitivity-blocks}
    Let $\Gamma\times\overrightarrow{K}_2$ be connected, let $r\ge3$, $s\ge1$, and put $d=\gcd(r,s)$ and $\Sigma=\overrightarrow{\operatorname{PX}}(\Gamma,r,s)$. For $t\ge0$, let
    \[
        q=\max\{1,\lceil t/s\rceil\},\qquad e=t-(q-1)s,\qquad
        t'=d\lceil t/d\rceil.
    \]
    Then $\Sigma$ is $t$-arc-transitive if and only if it is $t'$-arc-transitive. In particular, if $\Sigma$ is vertex-transitive, then the largest degree of arc-transitivity of $\Sigma$ is divisible by $d$.
\end{corollary}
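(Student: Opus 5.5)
The plan is to read off both sides from Theorem~\ref{thm:arc-transitivity-new}. That theorem says $t$-arc-transitivity of $\Sigma$ depends on $t$ only through the pair $(q,h)$, where $h=\lceil e/d\rceil$. Indeed, the set $W_q(\Gamma)^h\times W_{q-1}(\Gamma)^{m-h}$ and the $+d$-twisted closed walk action of $\mathcal{P}_\ell(\Gamma)$ for $(r,s)$ are determined by $q$ and $h$, since $\ell$, $m$ and $c$ depend only on $r$ and $s$. So it suffices to show that $t$ and $t'=d\lceil t/d\rceil$ give the same pair $(q,h)$. Then both transitivity conditions are literally the same statement, and the equivalence follows in both directions at once, with no need for a separate extension-of-arcs argument.

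The first step is $q$. If $t=0$ then $t'=0$, and there is nothing to prove. For $t\ge1$ we have $t'\ge t\ge 1$, so we must show $\lceil t/s\rceil=\lceil t'/s\rceil$. Write $s=md$. The nested ceiling identity $\lceil x/(md)\rceil=\lceil \lceil x/d\rceil/m\rceil$ for positive integers gives
\[
\Bigl\lceil \frac{t}{s}\Bigr\rceil=\Bigl\lceil\frac{\lceil t/d\rceil}{m}\Bigr\rceil=\Bigl\lceil\frac{t'/d}{m}\Bigr\rceil=\Bigl\lceil\frac{t'}{s}\Bigr\rceil .
\]
Hence $q'=q$.

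The second step is $h$. Set $e'=t'-(q-1)s$. Since $(q-1)s=(q-1)md$ is a multiple of $d$, we get
\[
\lceil e'/d\rceil=\lceil t'/d\rceil-(q-1)m=\lceil t/d\rceil-(q-1)m=\lceil e/d\rceil .
\]
Thus $h'=h$. The only point needing care in these two steps is the nested ceiling identity together with the boundary case $t=0$, where $q$ is defined through the $\max$. I expect nothing harder than that.

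For the final claim, suppose $\Sigma$ is vertex-transitive and let $t$ be the largest degree of arc-transitivity; if no largest degree exists, the claim is vacuous. If $d\nmid t$, then $t'>t$, and the equivalence just proved shows $\Sigma$ is $t'$-arc-transitive. This contradicts the maximality of $t$, so $d\mid t$.
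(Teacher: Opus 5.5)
Your proposal is correct and follows essentially the paper's argument: you show that $t$ and $t'$ give the same pair $(q,h)$ in Theorem~\ref{thm:arc-transitivity-new}, and then conclude by maximality. The only differences are minor: you get $q'=q$ from the nested ceiling identity, where the paper uses the inequalities $(q-1)s<t'\le qs$ (which follow from $d\mid s$), and you explicitly treat the case where no largest degree exists, which the paper leaves implicit.
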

\begin{proof}
    Since $d\mid s$, we have $t\le t'\le qs$. If $t=0$, then $q=1$, $e=0$ and $t'=0$, so the equivalence is immediate. If $t\ge1$, then $1\le e\le s$ and $(q-1)s<t'\le qs$. Thus the integers $t$ and $t'$ give the same values of $q$ in Theorem~\ref{thm:arc-transitivity-new}.
    Let $e'=t'-(q-1)s$.
    Then $e'/d=\lceil t/d\rceil-(q-1)s/d=\lceil e/d\rceil$, which implies that  $\lceil e'/d\rceil=\lceil e/d\rceil=h$ in Theorem~\ref{thm:arc-transitivity-new}, so the two transitivity conditions are identical.

    If $T\ge0$ is the largest degree of arc-transitivity of $\Sigma$, the equivalence gives $d\lceil T/d\rceil$-arc-transitivity. Since $d\lceil T/d\rceil\ge T$, maximality forces equality, and hence $d\mid T$. This includes the case $T=0$.
\end{proof}

Here is an exact criterion when $\Gamma$ has no non-diagonal two-fold automorphisms.

\begin{corollary}\label{cor:arc-transitivity-diagonal}
    Let $\Gamma$ have at least two vertices, let $\Gamma\times\overrightarrow{K}_2$ be connected, and suppose that every two-fold automorphism of $\Gamma$ has the form $(\alpha,\alpha)$ with $\alpha\in\Aut(\Gamma)$. Let $r\ge3$, $s\ge1$, $t\ge0$ and $\Sigma=\overrightarrow{\operatorname{PX}}(\Gamma,r,s)$. Then $\Sigma$ is $t$-arc-transitive if and only if $s\mid r$ and $\Aut(\Gamma)$ is transitive on $W_{\lceil t/s\rceil}(\Gamma)$. In particular, $\Sigma$ is vertex-transitive if and only if $s\mid r$ and $\Gamma$ is vertex-transitive.
\end{corollary}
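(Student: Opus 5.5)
The plan is to specialize Theorem~\ref{thm:arc-transitivity-new}. By hypothesis every two-fold automorphism is diagonal, so an element $(\sigma_1,\ldots,\sigma_\ell)\in\mathcal{P}_\ell(\Gamma)$ satisfies $\sigma_i\to\sigma_{i+1}$ only if $\sigma_i=\sigma_{i+1}$ for all $i$ (indices modulo $\ell$). Hence $\mathcal{P}_\ell(\Gamma)=\{(\alpha,\ldots,\alpha)\mid \alpha\in\Aut(\Gamma)\}\cong\Aut(\Gamma)$. Under this identification every shift $\sigma^{(jc)}$ equals $\sigma$. So the $+d$-twisted closed walk action on $W_q(\Gamma)^h\times W_{q-1}(\Gamma)^{m-h}$ becomes the diagonal action of $\Aut(\Gamma)$ on this product, with $\alpha$ acting on each walk entrywise.

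Next I would show that this diagonal action is intransitive whenever $m\ge2$. This is the key step. When $m\ge2$, the set contains at least two factors, each of which is $W_k(\Gamma)$ with $k\ge0$. The diagonal subset of tuples in which the $0$-th entries of the first two factors are equal is nonempty and $\Aut(\Gamma)$-invariant. The complementary subset, where these two entries differ, is also nonempty because $|V\Gamma|\ge2$; to realize any pair of distinct starting vertices by walks of the required lengths I would use that $\Gamma$ has no sinks (Remark~\ref{rmk:connected-alternets}). Since $\Aut(\Gamma)$ acts by the same $\alpha$ on both entries, it preserves equality and inequality of these entries, so the action is not transitive. Therefore $t$-arc-transitivity forces $m=1$, that is, $s\mid r$.

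Conversely, suppose $s\mid r$. Then $d=s$ and $m=1$, and by the preceding Remark the twisted action reduces to the closed walk action, which here is the action of $\Aut(\Gamma)$ on a single factor $W_k(\Gamma)$. For $t\ge1$ we get $h=1$, so $k=q=\lceil t/s\rceil$. For $t=0$ we get $h=0$, so $k=q-1=0=\lceil 0/s\rceil$. In both cases $\Sigma$ is $t$-arc-transitive if and only if $\Aut(\Gamma)$ is transitive on $W_{\lceil t/s\rceil}(\Gamma)$.

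The vertex-transitivity statement is the case $t=0$, since $W_0(\Gamma)=V\Gamma$. The main point to handle with care is the edge case $t=0$, where the factors are just vertices; the intransitivity argument above covers it directly.
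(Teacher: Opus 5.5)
Your proposal is correct and takes essentially the same route as the paper. Both identify $\mathcal{P}_\ell(\Gamma)$ with the constant tuples, so the twisted action becomes the diagonal $\Aut(\Gamma)$-action; both read off the $s\mid r$ case from the single factor; and for $s\nmid r$ both use that $\Aut(\Gamma)$ preserves whether two starting vertices coincide. The only cosmetic difference is that the paper first truncates to $W_0(\Gamma)^m$ and then applies this invariant, whereas you apply it directly to the walk sets, using the absence of sinks to make both classes nonempty.
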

\begin{proof}
    Under the hypothesis, $\mathcal{P}_\ell(\Gamma)$ consists of the constant tuples $(\alpha,\ldots,\alpha)$ with $\alpha\in\Aut(\Gamma)$, for every $\ell\ge1$. If $s\mid r$, the twisted action in Theorem~\ref{thm:arc-transitivity-new} has unique factor, which is $W_{\lceil t/s\rceil}(\Gamma)$, and it is $W_0(\Gamma)$ when $t=0$. 
    The action on this factor is precisely the natural action of $\Aut(\Gamma)$, proving the equivalence in this case.

    Let $d,\ell,m,q,e,h$ be defined as in Theorem~\ref{thm:arc-transitivity-new}.
    Suppose $s\nmid r$. Then $m\ge2$. 
    We claim that the twisted action $\mathcal{P}_\ell(\Gamma)$ on $W_q(\Gamma)^h\times W_{q-1}(\Gamma)^{m-h}$ is not transitive.
    If $t\ge 1$, then $1\le e\le s$ and $1\le h\le m$ in Theorem~\ref{thm:arc-transitivity-new}.
    We have known in previous proof that if $\mathcal{P}_\ell(\Gamma)$ is transitive on $W_j(\Gamma)$, then $\mathcal{P}_\ell(\Gamma)$ is also transitive on $W_{j-1}(\Gamma)$ for $j\ge 1$.
    Therefore, if $\mathcal{P}_\ell(\Gamma)$ on $W_q(\Gamma)^h\times W_{q-1}(\Gamma)^{m-h}$ is transitive, then $\mathcal{P}_\ell(\Gamma)$ is transitive on $W_0(\Gamma)^m$ in the $+d$-twisted closed walk action.
    Let $x_1,x_2$ be different vertices of $\Gamma$.
    Then $(x_1,x_1,\ldots),(x_1,x_2,\ldots)\in W_0(\Gamma)^m$ where the skipped part can be any vertices.
    Then there is no $\alpha \in\Aut(\Gamma)$ such that
    $(x_1\alpha,x_1\alpha,\ldots)=(x_1,x_2,\ldots)$, a contradiction.
    This completes the proof.
\end{proof}

The next case we discuss directed Praeger--Xu graphs, which gives an alternative proof of \cite[Theorem 2.8 {\rm (iii)}]{Praeger1989high}

\begin{corollary}\label{cor:arc-transitivity-complete-loops}
    Let $m\ge 2$, $r\ge3$, $s\ge1$ and $t\ge0$. Then $\overrightarrow{\operatorname{PX}}(m,r,s)$ is $t$-arc-transitive if and only if $s+t\le r$. In particular, it is vertex-transitive if and only if $s\le r$.
\end{corollary}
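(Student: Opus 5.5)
The plan is to specialize the machinery of Theorem~\ref{thm:arc-transitivity-new} to the complete digraph with loops, and to work with the coordinates $x_1,\ldots,x_{s+t}$ used in that proof rather than with the twisted closed walk action itself. Let $\Gamma$ be the digraph on $m$ vertices with arc set $V\Gamma\times V\Gamma$, so that $\overrightarrow{\operatorname{PX}}(m,r,s)=\overrightarrow{\operatorname{PX}}(\Gamma,r,s)$.

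First we check the hypotheses. The digraph $\Gamma\times\overrightarrow{K}_2$ is the complete bipartite digraph $\overrightarrow{K}_{m,m}$, so it is connected. Since every pair of vertices is an arc, every pair $(\sigma_1,\sigma_2)\in\operatorname{Sym}(V\Gamma)^2$ is a two-fold automorphism. By Theorem~\ref{thm:aut-general-px}, $\Aut(\Sigma)=\operatorname{Sym}(V\Gamma)\wr\mathbb{Z}_r=\mathrm{S}_m\wr\mathbb{Z}_r$, acting by the cycle-shift action; this agrees with Lemma~\ref{lem:auto-of-De}. Consequently the subgroup $A_0$ in the proof of Theorem~\ref{thm:arc-transitivity-new} is the full group $\operatorname{Sym}(V\Gamma)^r$, with no constraints among the $g_i$.

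Next we describe the set $L$ of $t$-arcs starting in $B_0$. As in that proof, such a $t$-arc is encoded as $\underline{x_1,\ldots,x_{s+t}}$, subject only to the conditions $x_i\to x_{i+s}$. Here these conditions are vacuous, so $L$ is in bijection with $(V\Gamma)^{s+t}$. The element $(g_1,\ldots,g_r,0)$ acts by $x_i\mapsto x_ig_{i}$, with indices of $g$ read modulo $r$. Since $\Sigma$ is $t$-arc-transitive if and only if $A_0$ is transitive on $L$, it remains to decide when $\operatorname{Sym}(V\Gamma)^r$ is transitive on $(V\Gamma)^{s+t}$ under this action.

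The key step is the following dichotomy.
\begin{itemize}
\item If $s+t\le r$, the indices $1,\ldots,s+t$ are pairwise distinct modulo $r$. Each coordinate is then moved by its own independent symmetric group, so the action is transitive, because $\operatorname{Sym}(V\Gamma)$ is transitive.
\item If $s+t>r$, the positions $1$ and $r+1$ are both at most $s+t$ and are acted on by the same $g_1$. Using $m\ge2$, pick distinct $a,b\in V\Gamma$. No group element maps a tuple with $x_1=x_{r+1}=a$ to one with $x_1=a$ and $x_{r+1}=b$, since the relation $x_1=x_{r+1}$ is preserved. So the action is not transitive.
\end{itemize}
This gives the equivalence with $s+t\le r$. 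Taking $t=0$, where a $0$-arc is a single vertex, yields the vertex-transitivity statement $s\le r$, which is also consistent with Lemma~\ref{lem:cycle-shift}\,(ii).

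There is no serious obstacle. The only point needing care is to confirm that the proof of Theorem~\ref{thm:arc-transitivity-new} really reduces $t$-arc-transitivity to transitivity of $A_0$ on $L$ with exactly this coordinatewise action, including the case $t=0$. Working at that level avoids having to compute the $+d$-twisted action on $W_q(\Gamma)^h\times W_{q-1}(\Gamma)^{m-h}$ explicitly.
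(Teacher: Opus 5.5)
Your proposal is correct and follows the paper's proof essentially step for step. Like the paper, you note that every pair of permutations is a two-fold automorphism, so $A_0=\operatorname{Sym}(V\Gamma)^r$, and you encode $L$ coordinatewise as $(V\Gamma)^{s+t}$ with position $i$ moved by $g_i$. You then split on whether positions $1$ and $r+1$ both occur, additionally making explicit the connectivity of $\Gamma\times\overrightarrow{K}_2=\overrightarrow{K}_{m,m}$ and the witness pair $a\neq b$ that uses $m\ge 2$, both of which the paper leaves implicit.
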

\begin{proof}
    Let $\Gamma$ be the complete graph admitting self-loops of order $m$.
    Recall that $\overrightarrow{\operatorname{PX}}(m,r,s)=\overrightarrow{\operatorname{PX}}(\Gamma,r,s)$.
    Let $B_0=\{(x_1,\ldots,x_s,0)\in V\Sigma\mid x_i\in V\Gamma\}$
    and let
    \[L=\{(v_0,\ldots,v_t)\in W_t(\Sigma)\mid v_0\in B_0\}\]
    be the set of $t$-arcs whose starting vertex lies in $B_0$.
    Let
    \[A_0=\{(g_1,\ldots,g_r,0)\in\Aut(\Sigma)\}.
    \]
    As in the proof of Theorem~\ref{thm:arc-transitivity-new}, $\Sigma$ is $t$-arc-transitive if and only if $A_0$ acts on $L$ transitively.
    Moreover, every $l\in L$ can be represented uniquely by $s+t$ vertices $x_1,\ldots,x_{s+t}$ and the action of $A_0$ on $L$ is represented by
    \[\underline{x_1,\ldots,x_{s+t}}(g_1,\ldots,g_r,0)
        =\underline{x_1g_1,\ldots,x_{s+t}g_{s+t}}\]
    
    Note that every pair of permutations of $V\Gamma$ is a two-fold automorphism.
    Hence
    \[
    A_0=\operatorname{Sym}(V\Gamma)^r
    \]
    If $s+t\le r$, these permutations can be chosen independently. Since every sequence of vertices of $\Gamma$ satisfies the required arc conditions, the action is transitive.

    If $s+t>r$, the entries in positions $1$ and $r+1$ are acted on by the same permutation. Thus the action is not transitive. 
    The final assertion is the case $t=0$.
\end{proof}

For the loopless complete graph, the criterion is different.

\begin{corollary}\label{cor:arc-transitivity-complete-loopless}
    Let $\Gamma=K_n$ be the loopless complete graph on $n\ge3$ vertices, and let $r\ge3$, $s\ge1$ and $t\ge0$. Then $\overrightarrow{\operatorname{PX}}(K_n,r,s)$ is $t$-arc-transitive if and only if $s\mid r$ and $t\le s$. In particular, it is vertex-transitive if and only if $s\mid r$, and in that case its largest degree of arc-transitivity is exactly $s$.
\end{corollary}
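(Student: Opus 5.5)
The plan is to reduce the statement to Corollary~\ref{cor:arc-transitivity-diagonal}. That requires two facts about $K_n$ with $n\ge 3$: that $K_n\times\overrightarrow{K}_2$ is connected, and that every two-fold automorphism of $K_n$ is diagonal. Once these hold, the corollary says that $\Sigma=\overrightarrow{\operatorname{PX}}(K_n,r,s)$ is $t$-arc-transitive if and only if $s\mid r$ and $\Aut(K_n)=\mathrm{S}_n$ is transitive on $W_{\lceil t/s\rceil}(K_n)$. For $t=0$ the relevant set is $W_0(K_n)=V K_n$, on which $\mathrm{S}_n$ is transitive. This gives the vertex-transitivity claim immediately.

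\emph{Connectivity.} Since $n\ge 3$, $K_n$ is connected and non-bipartite, because it contains a triangle. As noted in the introduction, this is equivalent to $K_n\times\overrightarrow{K}_2$ being connected. Alternatively one can use Remark~\ref{rmk:connected-alternets}: any two vertices of $K_n$ share a common out-neighbour, namely a third vertex.

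\emph{Diagonality of two-fold automorphisms.} Let $(\alpha,\beta)\in\Aut^{\mathrm{TF}}(K_n)$. For all $u,v$ we have $u\ne v \iff u\alpha\ne v\beta$; equivalently $u=v\iff u\alpha=v\beta$. Taking $v=u$ gives $u\alpha=u\beta$ for every $u$, so $\alpha=\beta$. Moreover $\alpha$ lies in $\Sym(VK_n)=\Aut(K_n)$, so the two-fold automorphism is diagonal. This step is short; note that it genuinely uses the absence of loops, which is why the looped case in Corollary~\ref{cor:arc-transitivity-complete-loops} behaves differently.

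\emph{Walks.} What remains, and is the main point, is to decide for which $k=\lceil t/s\rceil$ the group $\mathrm{S}_n$ is transitive on $W_k(K_n)$, the directed walks $(v_0,\dots,v_k)$ with $v_{i-1}\ne v_i$. For $k=1$, $W_1$ is the set of ordered pairs of distinct vertices, and $\mathrm{S}_n$ is transitive on it. For $k\ge 2$, both $(x,y,x,\dots)$ and $(x,y,z,\dots)$ with $x,y,z$ distinct lie in $W_k$; this is where $n\ge 3$ is used. No permutation maps one to the other, because an automorphism preserves the equality $v_0=v_2$. Hence transitivity holds exactly when $\lceil t/s\rceil\le 1$, that is, $t\le s$ (the case $t=0$ was handled above).

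Combining these, $\Sigma$ is $t$-arc-transitive if and only if $s\mid r$ and $t\le s$. When $s\mid r$, this shows that $\Sigma$ is $s$-arc-transitive but not $(s+1)$-arc-transitive. So the largest degree of arc-transitivity is exactly $s$.
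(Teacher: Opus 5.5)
Your proposal is correct and follows the paper's proof essentially step for step. Like the paper, you check that $K_n\times\overrightarrow{K}_2$ is connected and show that two-fold automorphisms are diagonal via the non-arcs $(v,v)$. You then apply Corollary~\ref{cor:arc-transitivity-diagonal} and separate the walks $(a,b,a,\ldots)$ and $(a,b,c,\ldots)$ in $W_k(K_n)$ for $k\ge 2$, exactly as the paper does.
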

\begin{proof}
    The graph $K_n$ is connected and non-bipartite, so $K_n\times\overrightarrow{K}_2$ is connected. If $(\alpha,\beta)$ is a two-fold automorphism of $K_n$, then it preserves both the arc relation and its complement. Since the only non-arcs of $K_n$ are the pairs $(v,v)$, the pair $(v\alpha,v\beta)$ must be a non-arc for every vertex $v$. Thus $v\alpha=v\beta$ for every $v$, and hence $\alpha=\beta$.

    By Corollary~\ref{cor:arc-transitivity-diagonal}, the digraph $\overrightarrow{\operatorname{PX}}(K_n,r,s)$ is $t$-arc-transitive if and only if $s\mid r$ and $\operatorname{Sym}(n)$ is transitive on $W_{\lceil t/s\rceil}(K_n)$. The natural action of $\operatorname{Sym}(n)$ is transitive on $W_0(K_n)$ and $W_1(K_n)$. For every $k\ge2$, choose distinct vertices $a,b,c$. The walks $(a,b,a)$ and $(a,b,c)$ extend to walks of length $k$, but no permutation maps one extension to the other. 
    Therefore, $\operatorname{Sym}(n)$ is transitive on $W_k(K_n)$ exactly when $k\le1$.

    It follows that the required condition is $s\mid r$ and $\lceil t/s\rceil\le1$, equivalently $s\mid r$ and $0\le t\le s$. The final claim follows by taking $t=0$ and $t=s$, respectively.
\end{proof}

\begin{corollary}\label{cor:arc-transitivity-complete-multipartite}
    Let $a\ge3$, $b\ge1$, and let $\Gamma=K_{b,\ldots,b}$ be the complete multipartite graph with $a$ parts of size $b$. Let $r\ge3$, $s\ge1$ and $t\ge0$. Then $\overrightarrow{\operatorname{PX}}(\Gamma,r,s)$ is $t$-arc-transitive if and only if $s\mid r$ and $t\le s$. In particular, it is $s$-arc-transitive if and only if $s\mid r$, and in that case its largest degree of arc-transitivity is exactly $s$.
\end{corollary}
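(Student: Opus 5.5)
The plan is to apply Theorem~\ref{thm:arc-transitivity-new}, using the explicit description of $\Aut^{\operatorname{TF}}(\Gamma)$ from Example~\ref{examp:PX}. We cannot use Corollary~\ref{cor:arc-transitivity-diagonal} directly, since for $b\ge2$ there are non-diagonal two-fold automorphisms.

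\emph{Hypotheses and structure of $\mathcal{P}_\ell(\Gamma)$.} Since $a\ge3$, $\Gamma$ is connected and contains triangles, so it is non-bipartite and $\Gamma\times\overrightarrow{K}_2$ is connected. By Example~\ref{examp:PX}, every two-fold automorphism has the form $(h\sigma^*,h'\sigma^*)$ with $h,h'\in H$ and the same $\sigma\in\operatorname{Sym}(\mathbb{Z}_a)$. Chaining around a closed walk shows that every $(\sigma_1,\ldots,\sigma_\ell)\in\mathcal{P}_\ell(\Gamma)$ has the form $\sigma_i=h_i\sigma^*$ with one common $\sigma$. The key invariant follows: for any vertices $y,y'$ and any indices $i,j$, the vertices $y\sigma_i$ and $y'\sigma_j$ lie in the same part if and only if $y$ and $y'$ do. In other words, $\mathcal{P}_\ell(\Gamma)$ preserves the ``same part'' relation between arbitrary coordinates, including coordinates in different factors of the twisted action.

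\emph{Necessity.} First suppose $s\nmid r$, so that $m\ge2$. If $\Sigma$ were $t$-arc-transitive, then truncating walks would give transitivity on $W_0(\Gamma)^m$ in the twisted action; this is the same truncation argument used in the proof of Corollary~\ref{cor:arc-transitivity-diagonal}, valid because $\Gamma$ has no sinks. Each coordinate $y_j$ is moved by some $\sigma_{1+jc}=h\sigma^*$. Hence, by the invariant, the tuple with $y_0,y_1$ in the same part cannot be mapped to a tuple with $y_0,y_1$ in different parts, which is a contradiction.

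Next suppose $s\mid r$ but $t>s$. Then $q=\lceil t/s\rceil\ge2$, and by Proposition~\ref{thm:arc-transitivity} we need non-transitivity on $W_q(\Gamma)$. Since $a\ge3$, there are walks whose part sequences begin $(A,B,A)$ and walks whose part sequences begin $(A,B,C)$ with $A,B,C$ distinct. Both extend to walks of length $q$, because $\Gamma$ has no sinks. The closed walk action applies the common $\sigma^*$ to every position, so it preserves whether the first and third vertices lie in the same part. Thus no element maps one walk to the other.

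\emph{Sufficiency.} Suppose $s\mid r$ and $t\le s$. For $t\ge1$ we have $\lceil t/s\rceil=1$. Since $\Gamma$ is arc-transitive ($\Aut(\Gamma)\supseteq H\rtimes\operatorname{Sym}(\mathbb{Z}_a)$ acting diagonally is transitive on ordered pairs of adjacent vertices), Corollary~\ref{thm:arc-transitivity-apply}(i) gives $s$-arc-transitivity, and hence $t$-arc-transitivity for all $t\le s$. The case $t=0$ follows from Proposition~\ref{thm:ver-trans}, as $\Gamma$ is vertex-transitive.

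The final sentence of the statement then follows by taking $t=s$ and $t=s+1$. I expect the main care to be needed in the necessity step when $s\nmid r$: one must check that the reduction to $W_0(\Gamma)^m$ is legitimate for all $t$, and that the ``same part'' invariant holds across different twisted shifts $\sigma^{(jc)}$. This works precisely because $\sigma^*$ is common to all coordinates of an element of $\mathcal{P}_\ell(\Gamma)$.
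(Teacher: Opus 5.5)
Your proposal is correct and follows essentially the same route as the paper. Both arguments use the common-$\sigma^*$ description of $\mathcal{P}_\ell(\Gamma)$ from Example~\ref{examp:PX}. When $s\nmid r$, both reduce to $W_0(\Gamma)^m$ and use a same-part versus different-part pair. When $t>s$, both compare walks with part patterns $(A,B,A)$ and $(A,B,C)$. For sufficiency, both use transitivity on $W_0(\Gamma)$ and $W_1(\Gamma)$.

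The differences are cosmetic. You explicitly check that $\Gamma\times\overrightarrow{K}_2$ is connected, which the paper's proof leaves implicit. You also route sufficiency through Corollary~\ref{thm:arc-transitivity-apply}(i). There, the step from $s$-arc-transitivity to $t$-arc-transitivity quietly uses that $\Sigma$ has no sinks. You could avoid this by applying Proposition~\ref{thm:arc-transitivity} directly with $\lceil t/s\rceil=1$, as you had already noted.
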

\begin{proof}
    Write $q=\lceil g/s\rceil$, $d=\gcd(r,s)$, $\ell=r/d$ and $m=s/d$. By Example~\ref{examp:PX},
    \[
        \mathcal{P}_\ell(\Gamma)
        =\{(h_1\sigma^*,\ldots,h_\ell\sigma^*)\mid h_i\in H,\ \sigma\in\operatorname{S}_a\},
    \]
    where $H$ preserves every part of $\Gamma$.

    Suppose first that $s\nmid r$. Then $m\ge2$. 
    We have known in previous proof that if $\mathcal{P}_\ell(\Gamma)$ is transitive on $W_j(\Gamma)$, then $\mathcal{P}_\ell(\Gamma)$ is also transitive on $W_{j-1}(\Gamma)$ for $j\ge 1$.
    Therefore, if $\mathcal{P}_\ell(\Gamma)$ on $W_q(\Gamma)^h\times W_{q-1}(\Gamma)^{m-h}$ is transitive, then $\mathcal{P}_\ell(\Gamma)$ is transitive on $W_0(\Gamma)^{m}$ in the $+d$-twisted closed walk action. 
    Let $x_1,x_2\in V\Gamma$ such that $x_1$ and $x_2$ lie in different part of $\Gamma$.
    Then $(x_1,x_1,\ldots),(x_1,x_2,\ldots)\in W_0(\Gamma)^m$ where the skipped parts can be any vertices.
    However, there is no $(h_1\sigma^*,\ldots,h_\ell\sigma^*)\in\mathcal{P}_\ell(\Gamma)$ such that $(x_1h_1\sigma^*,x_1h_2\sigma^*,\ldots)=(x_1,x_2,\ldots)$ as $x_1$ and $x_2$ lies in different parts of $\Gamma$.
    By Theorem~\ref{thm:arc-transitivity-new}, $\overrightarrow{\operatorname{PX}}(\Gamma,r,s)$ is not $t$-arc-transitive for any $t\ge0$.

    Now suppose that $s\mid r$. Then $d=s$ and $m=1$. The group $\mathcal{P}_\ell(\Gamma)$ is transitive on $W_0(\Gamma)$ and $W_1(\Gamma)$. 
    Therefore, Theorem~\ref{thm:arc-transitivity-new} shows that $\overrightarrow{\operatorname{PX}}(\Gamma,r,s)$ is $t$-arc-transitive for $0\le t\le s$.

    It remains to consider $t>s$. In this case $q=\lceil t/s\rceil\ge2$ in Theorem~\ref{thm:arc-transitivity-new}. 
    The closed walk action of $\mathcal{P}_\ell(\Gamma)$ on $W_2(\Gamma)$ is not transitive: choose vertices $x_0,x_1,x_2$ in parts $0,1,0$, respectively, and vertices $y_0,y_1,y_2$ in parts $0,1,2$, respectively. Then $(x_0,x_1,x_2)$ and $(y_0,y_1,y_2)$ lie in $W_2(\Gamma)$, but they cannot be in the same orbit. 
    Therefore, $\mathcal{P}_\ell(\Gamma)$ is not transitive on $W_2(\Gamma)$ which implies it is not transitive on $W_q(\Gamma)$ as well. 
    Theorem~\ref{thm:arc-transitivity-new} now shows that $\overrightarrow{\operatorname{PX}}(\Gamma,r,s)$ is not $t$-arc-transitive. This proves the result.
\end{proof}

\subsection{Cayleyness}
In this subsection, we explore when $\overrightarrow{\operatorname{PX}}(\Gamma,r,s)$ is a Cayley digraph.

\begin{theorem}\label{thm:Cayleyness}
    Let $\Gamma$ be a digraph such that $\Gamma\times\overrightarrow{K}_2$ is connected, and let $s\ge 1$ and $r\ge 3$. 
    Let $\Sigma=\overrightarrow{\operatorname{PX}}(\Gamma,r,s)$.
    \begin{itemize}
        \item [\rm (i)] $\Gamma[r]$ is a Cayley graph if and only if $\Gamma$ is a Cayley graph.
        \item [\rm (ii)] If every two-fold automorphism of $\Gamma$ has the form $(\alpha,\alpha)$ with $\alpha\in\Aut(\Gamma)$, then $\Sigma$ is a Cayley graph if and only if $s\mid r$ and $\Gamma$ is a Cayley graph.
    \end{itemize}
\end{theorem}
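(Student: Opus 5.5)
Both parts reduce to Lemma~\ref{lem:cycle-shift}(iii), using the description of $\Aut(\Sigma)$ from Theorem~\ref{thm:aut-general-px} as a subgroup of $W=\Sym(V\Gamma)\wr\mathbb{Z}_r$ acting by the cycle-shift action. For (i), that theorem gives $\Aut(\Gamma[r])=\Aut(\Gamma)\wr\mathbb{Z}_r$, still in the cycle-shift action with $s=r$. Lemma~\ref{lem:cycle-shift} is stated for a transitive group $G$, so I first dispose of vertex-transitivity. If $\Gamma$ is Cayley, then $G=\Aut(\Gamma)$ is transitive. If $\Gamma[r]$ is Cayley, it is vertex-transitive, and Proposition~\ref{thm:ver-trans} with $\ell=1$, where $\mathcal{P}_1(\Gamma)=\Aut(\Gamma)$, shows that $\Gamma$ is vertex-transitive. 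The case $|V\Gamma|=1$ is trivial. So in either direction $G=\Aut(\Gamma)$ is transitive with $|V\Gamma|>1$, and since $s=r$ gives $s\mid r$, Lemma~\ref{lem:cycle-shift}(iii) says $\Aut(\Gamma)$ has a regular subgroup if and only if $\Aut(\Gamma)\wr\mathbb{Z}_r$ has one. A digraph is Cayley exactly when its automorphism group has a regular subgroup, so (i) follows.

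For (ii), assume every two-fold automorphism of $\Gamma$ is diagonal. Then the condition $g_i\to g_{i+s}$ in Theorem~\ref{thm:aut-general-px} forces $g_i=g_{i+s}\in\Aut(\Gamma)$. Hence $\Aut(\Sigma)$ consists of tuples that are constant on the cosets of $\langle s\rangle=\langle d\rangle$ in $\mathbb{Z}_r$, with $d=\gcd(r,s)$, together with the rotation.

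For necessity, suppose $\Sigma$ is Cayley. Then it is vertex-transitive, and Corollary~\ref{cor:arc-transitivity-diagonal} gives $s\mid r$ and that $\Gamma$ is vertex-transitive; if $|V\Gamma|=1$ everything is trivial. With $s\mid r$ we have $d=s$, so $\Aut(\Sigma)=\{(g_1,\ldots,g_r,k)\mid g_i\in\Aut(\Gamma),\ g_i=g_{i+s}\}$. Take a regular subgroup $B$. Its projection onto $\mathbb{Z}_r$ is transitive, hence surjective, so $K=B\cap\Aut(\Sigma)_{\text{base}}$ has order $|V\Gamma|^s$ and acts regularly on the layer $B_0=(V\Gamma)^s\times\{0\}$. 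On $B_0$ an element acts as $(g_1,\ldots,g_s)$ in the product action, and these $g_i$ range independently over $\Aut(\Gamma)$, so $K$ embeds in $\Aut(\Gamma)^s$ as a regular subgroup on $(V\Gamma)^s$. By \cite[Lemma 2.2]{dobson2023When}, exactly as in the proof of Lemma~\ref{lem:cycle-shift}(iii), $\Aut(\Gamma)$ has a regular subgroup, so $\Gamma$ is Cayley.

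For sufficiency, suppose $s\mid r$ and $H\le\Aut(\Gamma)$ is regular. The subgroup $D\rtimes\mathbb{Z}_r$ built in the proof of Lemma~\ref{lem:cycle-shift}(iii), with $D=\{(h_i)\in H^r\mid h_{i+s}=h_i\}$, satisfies the periodicity condition, so it lies in $\Aut(\Sigma)$, and it acts regularly on $V\Sigma$. Hence $\Sigma$ is Cayley.

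The main care point is the necessity direction of (ii). Lemma~\ref{lem:cycle-shift}(iii) is phrased for the full wreath product, whereas $\Aut(\Sigma)$ here is the smaller periodic subgroup, so the projection and layer-restriction argument has to be redone for it. That argument only needs the base subgroup acting on a single layer as a subgroup of $\Aut(\Gamma)^s$ in the product action, which is what I used above.
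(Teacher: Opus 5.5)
Your proof is correct and follows the paper's route. Part (i) comes from Theorem~\ref{thm:aut-general-px} together with Lemma~\ref{lem:cycle-shift}(iii) at $s=r$. For (ii), necessity uses Corollary~\ref{cor:arc-transitivity-diagonal} plus a regular-subgroup and Dobson argument, and sufficiency uses the periodic subgroup $D\rtimes\mathbb{Z}_r$. Your explicit check in (i) that $\Aut(\Gamma)$ is transitive before invoking Lemma~\ref{lem:cycle-shift} is a detail the paper leaves implicit.

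The ``care point'' you flag is not a real obstacle. Under the diagonal hypothesis every $g_i$ lies in $\Aut(\Gamma)$, so $\Aut(\Sigma)\le\Aut(\Gamma)\wr\mathbb{Z}_r$. A regular subgroup of $\Aut(\Sigma)$ is then a regular subgroup of the full wreath product in the cycle-shift action. Lemma~\ref{lem:cycle-shift}(iii) therefore applies directly, which is what the paper does. Your redone layer argument is correct, just unnecessary.

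One caveat, which the paper's proof shares: the case $|V\Gamma|=1$ is not trivial in (ii).
\begin{itemize}
\item There $\Gamma$ is a single vertex with a loop, because positive out-valency is assumed.
\item All hypotheses of (ii) hold: $\Gamma\times\overrightarrow{K}_2$ is a single arc, hence connected, and the only two-fold automorphism is diagonal.
\item However $\Sigma\cong\overrightarrow{C}_r$, which is Cayley for every $s$. So $s\mid r$ is not necessary; for example, $r=3$ and $s=2$ is a counterexample.
\end{itemize}
Part (ii) therefore needs the extra hypothesis $|V\Gamma|\ge 2$, exactly as in Corollary~\ref{cor:arc-transitivity-diagonal}. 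With that hypothesis, your argument is complete.
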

\begin{proof}
    Let $G=\Aut(\Gamma)$. First suppose that $|V\Gamma|>1$.
    By Theorem~\ref{thm:aut-general-px}, the action of $\Aut(\Gamma[r])=G\wr\mathbb{Z}_r$ on $V\Gamma[r]$ is the cycle-shift action.
    {\rm (i)} follows from 
    Lemma~\ref{lem:cycle-shift}~{\rm (iii)}, with $s=r$.

    Now assume that every two-fold automorphism of $\Gamma$ is diagonal, and suppose that $\Sigma$ is a Cayley graph. 
    If $|V\Gamma|>1$, then $\Sigma$ is vertex-transitive, so Corollary~\ref{cor:arc-transitivity-diagonal}, with $t=0$, implies that $G$ is transitive and $s\mid r$. 
    By Theorem~\ref{thm:aut-general-px}, a regular subgroup of $\Aut(\Sigma)$ is also a regular subgroup of $G\wr\mathbb{Z}_r$ in its cycle-shift action. Lemma~\ref{lem:cycle-shift}~{\rm (iii)} implies that $G$ has a regular subgroup. Hence $\Gamma$ is a Cayley graph. The same conclusion is immediate when $|V\Gamma|=1$.

    Conversely, suppose that $\Gamma$ is a Cayley graph, and let $R\leq G$ be regular on $V\Gamma$. Define
    \[
        L=\{(g_1,\ldots,g_r,\emptycommand{j})\in R\wr\mathbb{Z}_r\mid g_{i+s}=g_i\text{ for all }i\in\mathbb{Z}_r\}.
    \]
    Since $s\mid r$, $L$ is a subgroup. Moreover, $g_i=g_{i+s}$ implies $g_i\to g_{i+s}$, so Theorem~\ref{thm:aut-general-px} gives $L\leq\Aut(\Sigma)$.
    Let
    \[
        u=(x_1,\ldots,x_s,\emptycommand{i})
        \quad\text{and}\quad
        v=(y_1,\ldots,y_s,\emptycommand{j})
    \]
    be vertices of $\Sigma$. For each $a\in\{1,\ldots,s\}$, choose the unique element $g_{i+a}\in R$ such that $x_ag_{i+a}=y_a$. These elements determine an element
    $(g_1,\ldots,g_r,\emptycommand{j-i})$ of $L$ sending $u$ to $v$. Thus $L$ is transitive on $V\Sigma$.
    Also note that $|L|=|R|^s\cdot r=|\Sigma|$.
    Therefore $L$ is regular, so $\Sigma$ is a Cayley graph. This proves~{\rm (ii)}.
\end{proof}

\section{Examples of graphs of exponential growth}\label{sec:exponential-growth}

This section introduces an infinite family of graphs where the order of a vertex stabilizer increases exponentially with respect to the order of the graph.

By \cite{Praeger1989high}, if a vertex-transitive digraph $\Gamma$ is not twin-free, then there are digraph $\Sigma$ and integer $n$ such that $\Gamma=\Sigma[\overline{K_n}]$. Therefore, $\Aut(\Gamma)\cong\S_n\wr_{V\Sigma}\Aut(\Sigma)$. Let the order of a vertex stabilizer of $\Sigma$ be $O$. Then the order of a vertex stabilizer of $\Gamma$ is $O\cdot(n-1)!\cdot(n!)^{|V\Sigma|-1}$, which is large. Our example is a family of twin-free digraphs.

\begin{theorem}\label{thm:exponential-growth}
    Let $r\ge 2$, let $\Gamma=\operatorname{PX}(2,3,1)$, and let $\Sigma=\overrightarrow{\operatorname{PX}}(\Gamma,2r,2)$.
    \begin{itemize}
        \item [\rm (i)] $\Aut(\Sigma)\cong (\mathbb{Z}_2^{3r}\rtimes \S_3)^2\rtimes\mathbb{Z}_{2r}$.
        \item [\rm (ii)] $\Sigma$ is a twin-free connected arc-transitive digraph of order $72r$.
        \item [\rm (iii)] A vertex stabilizer of $\Aut(\Sigma)$ is of order $2^{6r}$.
        \item [\rm (iv)] $\Sigma$ is of out-valency $4$ and is not isomorphic to a directed Praeger-Xu graph.
    \end{itemize}
\end{theorem}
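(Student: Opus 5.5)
The plan is to reduce everything to Theorem~\ref{thm:aut-general-px} and Example~\ref{examp:PX}. Here $\Gamma=\operatorname{PX}(2,3,1)\cong K_{2,2,2}$, which has $6$ vertices and valency $4$. It is connected and non-bipartite, since it contains triangles, so $\Gamma\times\overrightarrow{K}_2$ is connected. Put $s=2$ and replace $r$ by $2r$.

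\textbf{Part (i).} By Theorem~\ref{thm:aut-general-px}, $\Aut(\Sigma)$ consists of the tuples $(g_1,\ldots,g_{2r},k)$ with $g_i\to g_{i+2}$. The map $i\mapsto i+2$ on $\mathbb{Z}_{2r}$ has two cycles, the odd and the even indices, each of length $r$. So the coordinate part is $\mathcal{P}_r(\Gamma)^2$.

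By Example~\ref{examp:PX}, $\Aut^{\mathrm{TF}}(\Gamma)=(H\times H)\rtimes K$ with $H\cong\mathbb{Z}_2^3$. A closed walk $\sigma_1\to\cdots\to\sigma_r\to\sigma_1$ therefore has the form $\sigma_i=h_i\tau^*$ with a common $\tau\in\S_3$ and arbitrary $h_i\in H$. This gives $\mathcal{P}_r(\Gamma)\cong H^r\rtimes\S_3\cong\mathbb{Z}_2^{3r}\rtimes\S_3$.

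The shift by $1\in\mathbb{Z}_{2r}$ interchanges the two cycles, and the shift by $2$ rotates each cycle. Hence $\Aut(\Sigma)\cong(\mathbb{Z}_2^{3r}\rtimes\S_3)^2\rtimes\mathbb{Z}_{2r}$.

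\textbf{Part (ii).} The order is $|V\Gamma|^2\cdot 2r=36\cdot 2r=72r$. Connectivity follows from Theorem~\ref{cor:nice-connectivity}. Since $s=2$ divides $2r$, arc-transitivity follows from Corollary~\ref{cor:arc-transitivity-complete-multipartite}, with $a=3$, $b=2$, $t=1\le s$; in fact $\Sigma$ is $2$-arc-transitive.

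For twin-freeness I would argue directly. The out-neighbours of $(x_1,x_2,i)$ are the vertices $(x_2,y,i+1)$ with $y\sim x_1$ in $\Gamma$. Two vertices with the same out-neighbourhood must share $x_2$ and $i$, and $x_1,x_1'$ must have the same neighbourhood in $\Gamma$, so they lie in the same part. Symmetrically, the in-neighbours of $(x_1,x_2,i)$ are $(z,x_1,i-1)$ with $z\sim x_2$. So two vertices with equal in- and out-neighbourhoods agree in the layer and have both coordinates in the same parts, with $x_2$ actually equal and $x_1$ equal by the in-neighbourhood condition. Hence they are equal.

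This is the step I would check most carefully, and it depends on what "twin-free" means. If it means that no two vertices have equal out-neighbourhoods alone, the claim fails: $(x,x_2,i)$ and $(x',x_2,i)$ with $x,x'$ in the same part are out-twins. In that case I would use Praeger's notion, that $\Sigma$ is not a lexicographic product $\Sigma'[\overline{K_n}]$ with $n\ge2$. That requires both in- and out-neighbourhoods to agree, which the argument above rules out.

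\textbf{Part (iii).} Divide $|\Aut(\Sigma)|=(8^r\cdot 6)^2\cdot 2r$ by the order $72r$. This gives $2^{6r}\cdot 36\cdot 2r/(72r)=2^{6r}$.

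\textbf{Part (iv).} The out-valency of $\Sigma$ equals the valency of $\Gamma$, which is $4$. Suppose $\Sigma\cong\overrightarrow{\operatorname{PX}}(m,r',s')$. Out-valency $m=4$ forces the automorphism group to be $\S_4\wr\mathbb{Z}_{r'}$ by Lemma~\ref{lem:auto-of-De}. Its order is $24^{r'}r'$, which is divisible by $3^{r'}$, and $r'\ge3$.

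Our group has order $2^{6r+2}\cdot 9\cdot 2r$, whose $3$-part is at most $9\cdot r_3$, where $r_3$ is the $3$-part of $r$. The number of vertices gives $4^{s'}r'=72r$, so $9\mid r'$ and $r'\ge9$. Then $3^{r'}$ exceeds $9r_3$ because $r_3\le r'$, a contradiction.
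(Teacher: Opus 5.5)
Your proof is correct. Parts (i) and (iii) are exactly the paper's argument; the differences are in (ii) and (iv).

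\textbf{Twin-freeness.} The paper simply notes that $\Sigma$ is loosely attached (Lemma~\ref{lem:loosely-attached}). Two vertices with equal in- and out-neighbourhoods would be two common sources of one alternet $B_1$ and two common sinks of another alternet $B_2$, giving $|\mathrm{sink}(B_2)\cap\mathrm{source}(B_1)|\ge 2$. Your direct neighbourhood computation is equally valid. Your remark that $(x,x_2,i)$ and $(x',x_2,i)$ are out-twins is a genuinely useful clarification. The paper never defines ``twin-free'', and both its one-line argument and the statement itself only work in the in-and-out sense. For vertex-transitive digraphs, that sense is the same as not being $\Sigma'[\overline{K_n}]$ with $n\ge 2$.

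\textbf{Arc-transitivity.} You cite Corollary~\ref{cor:arc-transitivity-complete-multipartite} rather than Proposition~\ref{thm:ver-trans} together with Corollary~\ref{thm:arc-transitivity-apply}(i). This is only a different citation, and it also gives $2$-arc-transitivity for free.

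\textbf{Part (iv).} The paper uses (iii). Vertex-transitivity forces $s'\le r'$, so by Lemma~\ref{lem:cycle-shift}(i) the vertex stabilizer of $\S_4\wr\mathbb{Z}_{r'}$ is $\S_3^{s'}\times\S_4^{r'-s'}$. Its order is divisible by $3$, contradicting $|\Aut(\Sigma)_v|=2^{6r}$. You instead compare $3$-parts of the full group orders. This avoids the stabilizer description and the condition $s'\le r'$, at the cost of extra arithmetic from $4^{s'}r'=72r$.

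Your justification of the final inequality is slightly terse. The clean version is that the $3$-part of $r'$ equals the $3$-part $9r_3$ of $72r$. Hence $3^{r'}>r'\ge 9r_3$, which gives $3^{r'}\nmid|\Aut(\Sigma)|$.
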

\begin{proof}
	By Example~\ref{examp:PX}, $\Gamma\cong K_{2,2,2}$ and
	\[
		\Aut^{\operatorname{TF}}(\Gamma)=(H\times H)\rtimes \S_3,
		\qquad H\cong\mathbb{Z}_2^3.
	\]
	By
	Theorem~\ref{thm:aut-general-px}, let 
	\[L=\{(g_1,\ldots,g_{2r},0)\mid g_i\to g_{i+2}\}\le\Aut(\Sigma).\]
	Here the subscript $i$ is taken modulo $2r$.
	The map $i\mapsto i+2$ has two orbits, each of length $r$. 
    Recall that Example~\ref{examp:PX} states that $(\alpha,\beta)\in\Aut^{\operatorname{TF}}(\Gamma)$ if and only if $\alpha=h_1\sigma^*$ and $\beta=h_2\sigma^*$ for some $h_1,h_2\in H$ with the same $\sigma$.
    Hence a closed walk $g_1\to g_3\to \cdots\to g_{2r-1}\to g_1$ consists of $h_j\sigma^*$ with a common $\sigma$.
	Therefore, on either orbit, the preceding description
	of $\Aut^{\operatorname{TF}}(\Gamma)$ shows that
	\[
		L\cong(\mathbb{Z}_2^{3r}\rtimes\S_3)^2.
	\]
	Thus
	\[
		\Aut(\Sigma)=L\rtimes\mathbb{Z}_{2r}
		\cong(\mathbb{Z}_2^{3r}\rtimes\S_3)^2\rtimes\mathbb{Z}_{2r},
	\]
	proving~{\rm(i)}.

	The graph $\Gamma\cong K_{2,2,2}$ is connected and non-bipartite, so
	$\Gamma\times\overrightarrow{K}_2$ is connected. Then by Theorem~\ref{cor:nice-connectivity}, $\Sigma$ is connected. 
    By Lemma~\ref{lem:loosely-attached}, $\Sigma$ is loosely-attached, hence is also twin-free.
    As $\Gamma$ is arc-transitive, by Theorem~\ref{thm:ver-trans} and Theorem~\ref{thm:arc-transitivity-apply}, $\Sigma$ is both vertex-transitive and arc-transitive.
	
	Finally,
	\[
		|V\Sigma|=|V\Gamma|^2(2r)=6^2(2r)=72r,
	\]
	which proves~{\rm(ii)}.

	By~{\rm(i)},
	\[
		|\Aut(\Sigma)|=(2^{3r}\cdot 6)^2(2r).
	\]
	Since $\Sigma$ is vertex-transitive, the orbit-stabilizer theorem and
	part~{\rm(ii)} give
	\[
		|\Aut(\Sigma)_v|
		=\frac{(2^{3r}\cdot 6)^2(2r)}{72r}=2^{6r}
	\]
	for every vertex $v$ of $\Sigma$, proving~{\rm(iii)}.

    Finally, assume that $\Sigma\cong\overrightarrow{\operatorname{PX}}(m',r',s')$. Since $\Sigma$ is vertex-transitive, $s'\le r'$. Note that $\operatorname{val}(\Gamma)=4$.
    Recall that the out-valency of $\overrightarrow{\operatorname{PX}}(\Gamma,r,s)$ equals that of $\Gamma$.
    We have that the out-valency of $\Sigma$ is also $4$.
    Therefore, $m'=4$.
    Then by Lemma~\ref{lem:auto-of-De}, $\Aut(\Sigma)=\S_4\wr\mathbb{Z}_{r'}$ and
    $\Aut(\Sigma)_v=(\S_3)^{s'}\times (\S_4)^{r'-s'}$.
    However, $\Aut(\Sigma)_v$ is a $2$-group by {\rm (iii)}, which is impossible. This proves {\rm (iv)}.
\end{proof}

\begin{remark}\label{rmk:conjecture}
    Let $X$ be the underlying graph $\operatorname{PX}(\Gamma,2r,2)$ of $\Sigma$ in Theorem~\ref{thm:exponential-growth}. Then $X$ is $8$-valent.  Thus, this example is related to Conjecture~3.1 and Question~3.2 of Verret~\cite{Verret2013Order}, which concern undirected graphs.

    Write $V\Gamma=\mathbb{Z}_2\times\mathbb{Z}_3$. The permutation
    \[
        \iota\colon (x,y,i)\longmapsto (y,x,-i)
    \]
    reverses the arcs of $\Sigma$. It normalizes $\Aut(\Sigma)$, and hence
    \[
        G=\langle\Aut(\Sigma),\iota\rangle
        =\Aut(\Sigma)\rtimes\langle\iota\rangle
    \]
    is arc-transitive on $X$. Let $N$ be the subgroup of $L$ in the proof of Theorem~\ref{thm:exponential-growth} obtained by taking all the $\mathbb{Z}_2^3$-components and trivial $\S_3$-components. Then
    \[
        N\cong\mathbb{Z}_2^{6r}\trianglelefteq G.
    \]
    Let $(u,v)$ be an arc of $\Sigma$. One can check that
    \[
        |N_{uv}|=2^{6r-3}>1.
    \]
    Thus $N$ is an elementary abelian normal $2$-subgroup of $G$ which is not semiregular on the arcs of $X$.

    Moreover, $|G_{uv}|=2^{6r-2}$, so these soluble arc-stabilizers grow exponentially with $|V X|=72r$. Consequently, the family realizes the exceptional alternative in Verret's conjecture and gives an explicit infinite family of the type asked about in his Question~3.2. It does not, however, give a general classification as requested there.
\end{remark}

\section{Direct product that is a Cayley digraph}\label{sec:direct}

In this section, we give an example of a pair of digraphs $(\Sigma_1,\Sigma_2)$ such that $\Sigma_1\times\Sigma_2$ is Cayley, but $\Sigma_1$ is not vertex-transitive.
\begin{theorem}\label{thm:auto-g-de-bruijn}
  Let $\Gamma$ be a digraph such that $\Gamma\times\overrightarrow{K}_2$ is connected.
  Then \[\Aut(\Gamma^{*n})=\{(\alpha,\ldots,\alpha)\in \operatorname{Sym}(V\Gamma^n)\mid \alpha\in\Aut(\Gamma)\}\cong\Aut(\Gamma).\]
\end{theorem}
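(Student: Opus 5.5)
The plan is to realize $\Gamma^{*n}$ as a layer-quotient of a generalized Praeger--Xu digraph and then read off its automorphisms from Theorem~\ref{thm:aut-general-px}. The case $n=1$ is trivial, so assume $n\ge 2$.

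Fix an integer $r\ge 3$ with $r\equiv 1\pmod n$; for definiteness take $r=n+1$ when $n\ge 2$. The reason for this choice is that $\gcd(r,n)=1$, so the map $i\mapsto i+n$ on $\mathbb{Z}_r$ is a single $r$-cycle. Consider $\Sigma=\overrightarrow{\mathrm{PX}}(\Gamma,r,n)=\Gamma^{*n}\times\overrightarrow{C}_r$.

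The first step is to show that every $\alpha\in\Aut(\Gamma^{*n})$ yields an automorphism $\alpha\times\mathrm{id}$ of $\Sigma$ fixing every layer setwise. By Theorem~\ref{thm:aut-general-px}, $\alpha\times\mathrm{id}$ equals some $(g_1,\ldots,g_r,0)$ with $g_i\to g_{i+n}$ for all $i$, acting by the cycle-shift action. Comparing this action with $\alpha\times \mathrm{id}$ on layer $0$ and on layer $1$ gives two constraints:
\[
(x_1,\ldots,x_n)\alpha=(x_1g_1,\ldots,x_ng_n)=(x_1g_2,\ldots,x_ng_{n+1})
\]
for all $(x_1,\ldots,x_n)$. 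Hence $g_1=g_2=\cdots=g_{n+1}$, and since $r=n+1$, all the $g_i$ are equal to a single permutation $g$. The condition $g_i\to g_{i+n}$ then says $g\to g$, that is, $g\in\Aut(\Gamma)$, and $\alpha=(g,\ldots,g)$ acting coordinatewise.

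Conversely, for $g\in\Aut(\Gamma)$ the coordinatewise map $(g,\ldots,g)$ preserves the arc condition of Construction~\ref{construction}: it keeps $w_i=v_{i+1}$ and $(v_1,w_n)\in A\Gamma$. So it is an automorphism of $\Gamma^{*n}$. The map $g\mapsto(g,\ldots,g)$ is an injective homomorphism; it is nontrivial on $V\Gamma^n$ whenever $g\ne1$. This gives the displayed equality and the isomorphism with $\Aut(\Gamma)$.

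The main obstacle is the first step, namely justifying that $\alpha\times\mathrm{id}$ is an automorphism of the direct product $\Gamma^{*n}\times\overrightarrow{C}_r$. This is immediate from the definition of the direct product, since arcs there are pairs of arcs in each factor. The remaining care is in checking that the cycle-shift indices line up (layer $i$ uses $g_{i+1},\ldots,g_{i+n}$), which is what forces the $g_i$ to coincide. One could alternatively use any $r\ge3$ with the layers $0,\ldots,r-1$ all equal to $\alpha$, which forces all $g_i$ equal as soon as $r\ge 2$ layers overlap. In that version no coprimality is needed, but the choice $r=n+1$ keeps the comparison transparent.
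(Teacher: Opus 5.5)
Your argument is correct, but it takes a different route from the paper.

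The paper works directly inside $\Gamma^{*n}$. For $\beta\in\Aut(\Gamma^{*n})$, each arc $(x,v_2,\ldots,v_n)\to(v_2,\ldots,v_n,y)$ gives the overlap identities $\pi_i(\mathbf{x}\beta)=\pi_{i-1}(\mathbf{y}\beta)$. Connectivity of $\Gamma\times\overrightarrow{K}_2$ propagates these identities from arcs $(x,y)$ to all pairs. From this the paper deduces that $\pi_n((w_1,\ldots,w_n)\beta)$ depends only on $w_n$, which defines $\alpha$ with $\beta=(\alpha,\ldots,\alpha)$.

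You instead place $\Aut(\Gamma^{*n})\times 1$ inside the layer-fixing part of $\Aut(\Gamma^{*n}\times\overrightarrow{C}_r)$ and let Theorem~\ref{thm:aut-general-px} do the work. The cycle-shift description says layer $i$ is acted on by $(g_{i+1},\ldots,g_{i+n})$. Since $\alpha\times\mathrm{id}$ acts the same way on every layer, consecutive $g_j$ coincide, and $g\to g$ gives $g\in\Aut(\Gamma)$. There is no circularity, since Theorem~\ref{thm:aut-general-px} is proved earlier and does not use this result.

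\textbf{Comparison.} Your route is shorter and shows that the statement is really a corollary of the PX automorphism computation; it could sit right after Theorem~\ref{thm:aut-general-px}. However, it inherits the whole alternet chain (Proposition~\ref{prop:alternets}, Lemma~\ref{lem:loosely-attached}, Corollary~\ref{cor:alternet-graph}, Lemma~\ref{lem:auto-of-De}), and it hides where the hypothesis enters. The paper's argument is elementary and self-contained. It makes visible that connectivity of $\Gamma\times\overrightarrow{K}_2$ is used exactly to spread the overlap identities across that connected bipartite graph.

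\textbf{A small point.} Your stated reason for choosing $r=n+1$ (that $i\mapsto i+n$ is a single $r$-cycle) plays no role, because you never propagate along the relations $g_i\to g_{i+n}$. Equality of all the $g_i$ comes purely from comparing layers $0$ and $1$. The choice $r=n+1$ only ensures that the indices $1,\ldots,n+1$ exhaust $\mathbb{Z}_r$, as your closing remark already recognizes.
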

\begin{proof}
    Recall that $V\Gamma^{*n}=(V\Gamma)^n$.
  For every $\alpha\in\Aut(\Gamma)$, let $\alpha^{(n)}=(\alpha,\ldots,\alpha)$.
  Clearly, $\alpha^{(n)}\in\Aut(\Gamma^{*n})$.
  Thus we have \[\Aut(\Gamma^{*n})\supseteq\{(\alpha,\ldots,\alpha)\in \operatorname{Sym}(V\Gamma)\mid \alpha\in\Aut(\Gamma)\}.\]

  For the reverse inclusion, let $\beta\in\Aut(\Gamma^{*n})$.
  Let $\mathbf{x}=(x,v_2,\ldots,v_n)$ and $\mathbf{y}=(v_2,\ldots,v_n,y)$ be vertices of $\Gamma^{*n}$ such that $(\mathbf{x},\mathbf{y})\in A\Gamma^{*n}$.
  Then $(x,y)\in A\Gamma$ and $\mathbf{x}\beta\to\mathbf{y}\beta$.
  For each $i\in\{1,\ldots,n\}$, let $\pi_i:(V\Gamma)^n\to V\Gamma$ be the natural projection onto the $i$th coordinate.
  Then
  \[
    \pi_i((x,v_2,\ldots,v_n)\beta)=\pi_{i-1}((v_2,\ldots,v_n,y)\beta)
  \]
  for all $i\in\{2,\ldots,n\}$ and pair $(x,y)\in A\Gamma$.

  Since $\Gamma\times\overrightarrow{K}_2$ is connected, it follows that for every $w_1,\ldots,w_n,x\in V\Gamma$ and every $i\in\{2,\ldots,n\}$ we have
  \[
    \pi_i((w_1,\ldots,w_n)\beta)=\pi_{i-1}((w_2,\ldots,w_n,x)\beta)\tag{*}\label{equ:a_equation}.
  \]
  In particular, for every $w_1,\ldots,w_n,x_1,\ldots,x_{n-1}\in V\Gamma$,
  \[
    \pi_n((w_1,\ldots,w_n)\beta)=\pi_1((w_n,x_1,\ldots,x_{n-1})\beta).
  \]
  Hence $\pi_n((w_1,\ldots,w_n)\beta)$ depends only on $w_n$.
  Define $\alpha:V\Gamma\rightarrow V\Gamma$ by
  \[
    w\alpha:=\pi_n((w_1,\ldots,w_{n-1},w)\beta),
  \]
  where $w_1,\ldots,w_{n-1}\in V\Gamma$ are arbitrary.
  The previous observation shows that $\alpha$ is well-defined.

  Now for any $w_1,\ldots,w_n\in V\Gamma$ and any $i\in\{1,\ldots,n\}$, repeatedly applying (\ref{equ:a_equation}) above gives
  \[
    \pi_i((w_1,\ldots,w_n)\beta)=w_i\alpha.
  \]
  Therefore,
  \[
    (w_1,\ldots,w_n)\beta=(w_1\alpha,\ldots,w_n\alpha).
  \]
  It implies that $\alpha$ is bijective, hence $\alpha\in\Sym(V\Gamma)$.

  Finally, since $\beta$ preserves arcs of $\Gamma^{*n}$, the defining condition of $A\Gamma^{*n}$ implies
  \[
    (u,v)\in A\Gamma\iff (u\alpha,v\alpha)\in A\Gamma.
  \]
  Hence $\alpha\in\Aut(\Gamma)$ and $\beta=\alpha^{(n)}$.
Therefore $\Aut(\Gamma^{*n})=\{\alpha^{(n)}\mid \alpha\in\Aut(\Gamma)\}$ and $\Aut(\Gamma^{*n})\cong\Aut(\Gamma)$.
\end{proof}

\begin{remark}
    By Theorem~\ref{thm:auto-g-de-bruijn}, it is notable that the orbits of $\Aut(\Gamma^{*2})$ on $V\Gamma^{*2}$ is coincident with the orbital of $\Aut(\Gamma)$.
    It would be interesting to understand the relationship between $\Gamma^{*2}$ and orbital graphs of $\Gamma$.
\end{remark}

Now we are ready to give the example.

\begin{example}\label{examp:non-Cayley}
    Let $\Gamma$ be a Cayley digraph such that $\Gamma\times\overrightarrow{K}_2$ is connected. For example, take $\Gamma=C_3$, the undirected cycle of length $3$.
    Let $r\ge \max\{3,\log_{|V\Gamma|}(|\Aut(\Gamma)|)+1\}$ be an integer.
    Let $\Sigma_1=\Gamma^{*r}$ and $\Sigma_2=\overrightarrow{C}_r$.
    Then
    $|V\Sigma_1|=|V\Gamma|^r>|\Aut(\Gamma)|$.
    By Theorem~\ref{thm:auto-g-de-bruijn}, $\Aut(\Gamma)\cong\Aut(\Sigma_1)$.
    This implies that $|V\Sigma_1|>|\Aut(\Sigma_1)|$.
    Hence $\Sigma_1$ is not vertex-transitive.
    By Lemma~\ref{lem:cycle-shift}~{\rm (iii)} and Theorem~\ref{thm:aut-general-px}, $\Sigma_1\times\Sigma_2=\Gamma[r]$ is a Cayley digraph as $\Gamma$ is a Cayley digraph.
    This gives the example.
\end{example}

	\vskip0.2in

    \noindent{\bf Declaration of Generative AI and AI-Assisted Technologies.}
    During the preparation of this manuscript, the authors used ChatGPT Terra 5.6 for English language editing, ChatGPT Sol 6.1 to assist in drafting the proof of Theorem~\ref{thm:arc-transitivity-apply} and Corollary~\ref{cor:arc-transitivity-complete-multipartite}, and Claude 5.1 to assist in sanity-checking intermediate mathematical arguments and drafting verification scripts. 
    All core mathematical ideas originated solely with the authors. 
    The authors accept full responsibility for the mathematical validity and accuracy of all results presented in this paper.
    
    \vskip0.2in
	\noindent\thanks{{\bf Acknowledgments.}
		This work was supported by the National Natural Science Foundation of China (No.11901512) and the Natural Science Foundation of Yunnan Province (No. 202401AT070275). 
        We are grateful for Gabreil Verret's valuable comments and advices.}

\end{document}